\def\reals{{\mathbb R}}
\def\R{{\mathbb R}}
\def\C{{\mathbb C}}
\def\cl{{\rm Cl}}
\newtheorem{theorem}{Theorem}
\newtheorem{lemma}[theorem]{Lemma}
\newtheorem{corollary}[theorem]{Corollary}
\def\reals{{\mathbb R}}
\def\R{{\mathbb R}}
\def\C{{\mathbb C}}
\def\S{{\mathbb S}}
\def\cl{{\rm Cl}}
\def\minus{\backslash}
\def\p{{\bf p}}
\def\q{{\bf q}}
\theoremstyle{definition}
\newtheorem{definition}[theorem]{Definition}
\begin{document}

\title{Configurations of lines in space and combinatorial rigidity\footnote{Work on this paper 
was supported by Grant 892/13 from the Israel Science Foundation,
by the Israeli Centers of Research Excellence (I-CORE) program (Center No.~4/11),
and by a Shulamit Aloni Fellowship from the Israeli Ministry of Science.
}}

\author{Orit E. Raz\thanks{School of Computer Science, 
Tel Aviv University, Tel Aviv 69978, Israel. oritraz@post.tau.ac.il
}}
\maketitle

\begin{abstract}
Let $L$ be a sequence $(\ell_1,\ell_2,\ldots,\ell_n)$ of $n$ lines in $\C^3$.  
We define the {\it intersection graph} $G_L=([n],E)$ of $L$, where $[n]:=\{1,\ldots, n\}$, and with $\{i,j\}\in E$ 
if and only if $i\neq j$ and the corresponding lines $\ell_i$ and $\ell_j$ intersect,
or are parallel (or coincide). For a graph $G=([n],E)$, we say that a sequence $L$ 
is a {\it realization} of $G$ if $G\subset G_L$.
One of the main results of this paper is to provide a combinatorial characterization of graphs $G=([n],E)$ that
have the following property: 	
For every {\it generic} (see Definition~\ref{def:generic}) realization $L$ of $G$, 
that consists of $n$ pairwise distinct lines, we have $G_L=K_n$,
in which case the lines of $L$ are either all concurrent or all coplanar. 

The general statements that we obtain about lines, apart from their independent interest, turns out to be 
closely related to the notion of graph rigidity.
The connection is established due to the so-called Elekes--Sharir framework, which allows us to transform
the problem into an incidence problem involving lines in three dimensions.
By exploiting the geometry of contacts between lines in 3D, we can
obtain alternative, simpler, and more precise characterizations of the rigidity 
of graphs.
\end{abstract}

\section{Introduction}

Let $L$ be a sequence $(\ell_1,\ell_2,\ldots,\ell_n)$ of $n$ lines in $\C^3$.  
We define the {\it intersection graph} $G_L=([n],E)$ of $L$, where $[n]:=\{1,\ldots, n\}$, and with $\{i,j\}\in E$ 
if and only if $i\neq j$ and the corresponding lines $\ell_i$ and $\ell_j$ intersect,
or are parallel (or coincide). For a graph $G=([n],E)$, we say that a sequence $L$ 
is a {\it realization} of $G$ if\footnote{For 
graphs $G_1=(V_1,E_1)$ and $G_2=(V_2,E_2)$, we say that $G_1\subset G_2$ if $V_1=V_2$ and $E_1\subseteq E_2$.}
$G\subset G_L$.
 
We consider the following general question: What can be said about realizations $L$ of a certain graph $G$?  
In particular, we are interested in conditions on graphs $G$
that guarantee that, for every realization $L$ of $G$,
the lines of $L$ 
must be either all concurrent or all coplanar.
In other words, we want conditions on $G$ that guarantee that, for every realization $L$ of $G$,
we have $G_L=K_n$, the complete graph on $n$ vertices.
(It can be easily verified that $G_L=K_n$ implies that the lines of $L$ 
must be either all concurrent or all coplanar.)
Unfortunately, already by removing one edge of $K_n$,
this property seems to fail: One can easily find configurations $L$ with lines that are neither all concurrent nor all coplanar, and such that 
$G_L=K_n\setminus \{1,2\}$, say. Indeed, consider $n-2$ lines $\ell_3,\ldots,\ell_n$ that are all concurrent {\it and} all coplanar,
let $\ell_1$ be any line that lies on the common plane supporting those lines 
(but does not go through their common intersection point), and let 
$\ell_2$ be any line that goes through the common intersection point of $\ell_3,\ldots,\ell_n$ (but does not lie on
the plane supporting those lines). Then for $L=(\ell_1,\ell_2,\ldots,\ell_n)$, we get $G_L=K_n\setminus \{1,2\}$,
and the lines of $L$ are neither all concurrent nor all coplanar. 
Note however that in this example we had to use $n-2$
lines which are both concurrent and coplanar, which we would like to think of as a degenerate configuration of lines. 

In this paper we characterize graphs $G=([n],E)$ with the property that, for every {\it generic} (see Definition~\ref{def:generic})
realization $L$ of $G$, 
that consists of $n$ pairwise distinct lines, we have $G_L=K_n$ 
(that is, the lines of $L$ are either all concurrent or all coplanar). 

In the background of our results lies a connection (that we establish here) between line configurations
and the classical notion of rigidity of planar realizations of graphs (see Section~\ref{sec:rigid} for the definitions).
The connection is established (in Section~\ref{reduction}) due to the so-called Elekes--Sharir framework, which allows us to transform
the problem into an incidence problem involving lines in three dimensions.
By exploiting the geometry of contacts between lines in 3D, we can
obtain alternative, simpler, and more precise characterizations of the rigidity 
of graphs.

\section{Intersection graph and the variety $X_G$}

For simplicity, in this paper all the lines are assumed to be non-horizontal. 
Let $L$ be a sequence $(\ell_1,\ell_2,\ldots,\ell_n)$ of $n$ (not necessarily distinct) complex lines in $\C^3$.  
A line $\ell$ can be parametrized as
$$
\ell(t)=(a,b,0)+t(c,d,1),\quad t\in\C,
$$
for certain unique $a,b,c,d\in\C$, and we may represent $\ell$ as a point $u=(a,b,c,d)\in \C^4$, in this sense.
By identifying $\C^{4n}$ with $(\C^4)^n$, we may regard a sequence $L=(\ell_1,\ell_2,\ldots,\ell_n)$ of $n$ lines in $\C^3$ 
as a point $(u_1,\ldots,u_n)\in\C^{4n}$, where each $u_i=(a_i,b_i,c_i,d_i)\in\C^4$ 
is the point representing the line $\ell_i$ of $L$.
Similarly, every point $x\in\C^{4n}$ can be interpreted as a sequence $L=L(x)$ of $n$ (not necessarily distinct) 
lines in $\C^3$.

Given a graph $G=([n],E)$, we define the variety $X_G:=\cl (\hat X_G)$, where
$\cl(S)$ is the {\it Zariski closure} of a set $S\subset\C^{4n}$, and
$$
\hat X_G:=\left\{x\in \C^{4n}\mid G\subset G_{L(x)}~\text{and the lines of $L(x)$ are pairwise distinct}\right\}.
$$

Note that for every point $x\in X_G$, we have $G\subset G_{L(x)}$ (where 
here it is possible for some of the lines of $L(x)$ to coincide).
Indeed, we have $\hat X_G\subset Y_G$, where
$Y_G:=\left\{x\in \C^{4n}\mid G\subset G_{L(x)}\right\}$.
Since $Y_G$ is an algebraic variety, it follows that $X_G\subset Y_G$.
To see that $Y_G$ is a variety, 
let $g$ be the polynomial in the eight coordinates $a_1$, $b_1$, $c_1$, $d_1$, $a_2$, $b_2$, $c_2$, $d_2$,
which vanishes if and only if the pair of lines associated with the coordinates $(a_1,b_1,c_1,d_1)$ and 
$(a_2,b_2,c_2,d_2)$ intersect or are parallel.
Namely, 
\begin{equation}\label{intersect}
g(a_1,b_1,c_1,d_1,a_2,b_2,c_2,d_2)=(a_1-a_2)(d_1-d_2)-(b_1-b_2)(c_1-c_2).
\end{equation}
By definition, $Y_G$ is given by the system
$$
g(a_i,b_i,c_i,d_i,a_j,b_j,c_j,d_j)=0~~~~\forall \;(i,j)\in E,
$$
and is therefore a variety, as claimed.

We have the following simple observation.
\begin{lemma}\label{complete}
The variety $X_{K_n}$ is $(2n+3)$-dimensional.
\end{lemma}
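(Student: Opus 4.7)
The starting point, already noted in the introduction, is that whenever $L$ is a sequence of pairwise distinct lines with $G_L = K_n$, the lines of $L$ must be either all concurrent at a single point or all contained in a common plane. Hence $\hat X_{K_n} \subseteq A \cup B$, where $A \subset \C^{4n}$ is the set of (ordered) $n$-tuples of pairwise distinct non-horizontal lines passing through a common point, and $B$ is the analogous set of $n$-tuples lying in a common (non-horizontal) plane. Since $\cl$ preserves finite unions, it will suffice to show that each of $\cl(A)$ and $\cl(B)$ has dimension exactly $2n+3$.

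My plan is to bound these dimensions from above and below simultaneously by writing down explicit parametrizing morphisms. For $A$, a common point $p=(p_1,p_2,p_3)\in\C^3$ has $3$ parameters, and every non-horizontal line through $p$ is determined by a direction $(c,d,1)$, contributing $2$ further parameters per line. This yields a polynomial map
\begin{equation*}
\varphi_A \colon \C^3 \times \C^{2n} \longrightarrow \C^{4n}, \qquad \bigl(p,(c_i,d_i)_{i=1}^n\bigr) \longmapsto (u_1,\ldots,u_n),
\end{equation*}
where $u_i=(a_i,b_i,c_i,d_i)$ represents the unique line through $p$ with direction $(c_i,d_i,1)$; this is a simple affine expression in the parameters. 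Likewise for $B$, a non-horizontal plane has $3$ parameters, and a line in a fixed plane has $2$ parameters, giving a polynomial map $\varphi_B\colon \C^{2n+3}\to\C^{4n}$ whose image is dense in $B$. In either case the image has dimension at most $2n+3$, proving the upper bound.

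For the matching lower bound, I would verify that $\varphi_A$ and $\varphi_B$ are generically injective (hence the image has the same dimension as the source). For $\varphi_A$, on the open set where at least two of the resulting lines are distinct and non-parallel, the common point $p$ is recovered as their (unique) intersection, and the directions $(c_i,d_i)$ are then recovered from the $u_i$. For $\varphi_B$, on the open set where at least two of the lines meet in a single point, the common plane is recovered as their span, and each line is then recovered inside it. This yields $\dim A = \dim B = 2n+3$, and combining with the upper bound gives
\begin{equation*}
\dim X_{K_n} \;=\; \dim \cl(\hat X_{K_n}) \;=\; \max\bigl(\dim \cl(A),\,\dim \cl(B)\bigr) \;=\; 2n+3.
\end{equation*}

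The step I expect to require the most care is the generic injectivity (together with the tacit assumption $n\ge 2$, since the cases $n=1,2$ are special but agree with the formula: $X_{K_2}$ is the hypersurface $g=0$ in $\C^8$, of dimension $7=2\cdot 2+3$). One must make sure that the ``bad'' loci (lines coinciding, all directions parallel, or, for $\varphi_B$, all $n$ chosen lines being parallel in the plane) are proper closed subsets, so that removing them leaves a dense open set on which the recovery described above genuinely works; everything else is a routine closure and dimension argument.
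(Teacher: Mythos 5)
Your overall strategy is the same as the paper's: split $\hat X_{K_n}$ into the concurrent and coplanar families, parametrize each by a polynomial map from $\C^{2n+3}$, and conclude $\dim X_{K_n}=2n+3$. Your explicit generic-injectivity argument for the lower bound is a welcome addition (the paper is terse on this point, simply asserting that the image of $\C^{2n+3}$ under a polynomial map is $(2n+3)$-dimensional, which on its own only gives the upper bound).

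However, there is one concrete error in your case analysis: the containment $\hat X_{K_n}\subseteq A\cup B$ is false. Recall that $G_L$ has an edge $\{i,j\}$ when $\ell_i$ and $\ell_j$ intersect \emph{or are parallel}. Hence a tuple of $n\ge 3$ mutually parallel lines satisfies $G_L=K_n$, yet such lines need not pass through a common (affine) point nor lie in a common plane --- e.g.\ three vertical lines through $(0,0,0)$, $(1,0,0)$, $(0,1,0)$. More generally, if the lines are pairwise intersecting-or-parallel but not coplanar and not concurrent at a finite point, they must all share a direction. So you need a third piece $C$ consisting of tuples of mutually parallel lines; it is parametrized by a common direction $(c_0,d_0)$ together with the base points $(a_i,b_i)$, hence has dimension $2n+2<2n+3$, and the conclusion survives. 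This is exactly how the paper handles it, listing the $(2n+2)$-dimensional parallel family as a separate subvariety alongside the two $(2n+3)$-dimensional ones. (A similar, equally harmless, boundary case occurs in $B$ for tuples lying in a common \emph{vertical} plane, which your parametrization of planes by three affine parameters may miss; that family also has dimension only $2n+2$.) With the third piece added, your proof is correct and essentially identical to the paper's.
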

\begin{proof}
First note that a point in $\hat X_{K_n}$ corresponds to a sequence $L$ of $n$ mutually intersecting lines, 
and hence its lines are either all concurrent (or mutually parallel), or all coplanar. 

A line, given by the parameterization
$$
\ell(t)=(a,b,0)+t(c,d,1),~~~t\in\C
$$
passes through a point $(x,y,z)\in \C^3$ if and only if
$a=x-cz$ and $b=y-dz$. Thus, sequences $L$ of $n$ concurrent lines in $\C^3$ 
form a Zariski-dense subset of the following variety
$$
\{(x-c_1z,y-d_1z,c_1,d_1,\ldots, x-c_nz,y-d_nz,c_n,d_n)\mid x,y,z,c_1,d_1,\ldots,c_n,d_n\in\C\},
$$
which is $(2n+3)$-dimensional, as it is the image of $\C^{2n+3}$ under a polynomial function. 

Sequences $L$ of $n$ mutually parallel lines in $\C^3$ span the 
$(2n+2)$-dimensional subvariety of $X_{K_n}$ given by 
$$\{(a_1,b_1,c_0,d_0,\ldots, a_n,b_n,c_0,d_0)\in \C^{4n}\mid c_0,d_0,a_1,b_1,\ldots,a_n,b_n\in\C\}.
$$
Similarly, a line $\ell(t)=(a,b,0)+t(c,d,1)$ lies in a plane given by $z=\lambda x+\mu y+\nu$, if and only if 
$1=\lambda c+\mu d$ and $-\nu=\lambda a+\mu b$. Thus,
sequences $L$ of $n$ coplanar lines form a Zariski-dense subset of the following variety 
$$
\{(a_1,b_1,c_1,d_1,\ldots, a_n,b_n,c_n,d_n)\mid 1=\lambda c_i+\mu d_i, -\nu= \lambda a_i+\nu b_i, \lambda,\mu,\nu\in\C, i\in[n] \}.
$$
which is again $(2n+3)$-dimensional.
So $X_{K_n}$ is the union of these three varieties, and hence it is $(2n+3)$-dimensional. 
\end{proof}

The following simple lemma asserts that $X_G$ is always at least $(2n+3)$-dimensional, 
where $n$ is the number of vertices of $G$.
Later on (in Corollary~\ref{cor:main}), we introduce a sufficient and necessary 
condition on $G$ for $X_G$ to be exactly $(2n+3)$-dimensional.
\begin{lemma}\label{lower}
Let $G$ be a graph on the vertex set $[n]$. Then $X_G$ is of dimension at least $2n+3$.
\end{lemma}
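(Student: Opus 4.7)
The plan is to deduce the lower bound directly from Lemma~\ref{complete} by exploiting the antitone dependence of $\hat X_G$ on $G$. The key observation is that, for any two graphs $G_1 \subset G_2$ on the same vertex set $[n]$, if $x \in \C^{4n}$ satisfies $G_2 \subset G_{L(x)}$, then a fortiori $G_1 \subset G_{L(x)}$; therefore $\hat X_{G_2} \subset \hat X_{G_1}$. Applying this with $G_1 := G$ and $G_2 := K_n$, and using that the pairwise-distinctness condition in the definition of $\hat X$ depends only on $x$ and not on the ambient graph, we obtain $\hat X_{K_n} \subset \hat X_G$.

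Next, I would take Zariski closures on both sides. Since closure is monotone with respect to inclusion, this gives
\[
X_{K_n} = \cl(\hat X_{K_n}) \subset \cl(\hat X_G) = X_G.
\]
A containment of irreducible components (or indeed any containment of algebraic sets) forces $\dim X_G \geq \dim X_{K_n}$. By Lemma~\ref{complete}, $\dim X_{K_n} = 2n+3$, so $\dim X_G \geq 2n+3$, which is the desired conclusion.

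There is essentially no serious obstacle here; the proof is a one-line monotonicity argument. The only point worth double-checking is that the distinctness clause in the definition of $\hat X_G$ does not interfere with the inclusion $\hat X_{K_n} \subset \hat X_G$. This is fine because distinctness is imposed identically on both sides, and any $x \in \hat X_{K_n}$ automatically represents a sequence of $n$ pairwise distinct lines whose intersection graph contains $K_n$, hence also contains $G$. So the same point lies in $\hat X_G$, and the closures inherit the inclusion without issue.
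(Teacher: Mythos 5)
Your proof is correct and follows exactly the paper's argument: from $G\subset K_n$ one gets $\hat X_{K_n}\subset\hat X_G$, hence $X_{K_n}\subset X_G$ after taking closures, and the bound follows from Lemma~\ref{complete} by monotonicity of dimension. Your extra remark that the distinctness clause is imposed identically on both sides is a sensible (if implicit in the paper) sanity check.
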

\begin{proof}
Since $G$ (and every graph on $[n]$) is contained in $K_n$, we have, by definition, that $\hat X_{K_n}\subset \hat X_G$ and hence also
$X_{K_n}\subset X_G$.
By Lemma~\ref{complete}, $X_{K_n}$ is $(2n+3)$-dimensional, and thus $X_G$ is of dimension at least $2n+3$, as asserted.
\end{proof}

\section{Laman graphs}

We recall the definition of {\it Laman graphs}.
These graphs play a fundamental role in the theory of {\it combinatorial rigidity} of graphs in the plane, and were discovered by Laman \cite{Lam70}.
In our definition we refer only to the combinatorial properties of those graphs.
Later,  in Section~\ref{reduction}, the ``reason'' why these graphs pop up also in our context will become more apparent.

\begin{definition}
 A graph $G = (V ,E)$ is called a {\it Laman graph} if\\
(i) $|E| = 2|V|-3$, and\\
(ii)  every subgraph $G'=(V',E')$ of $G$, with $|V'|\ge 2$, satisfies $|E'|\le 2|V'|-3$.
\end{definition}

The main result of this section is the following theorem.
\begin{theorem}\label{main}
If $G$ is a Laman graph on $n$ vertices, then $X_G$ is $(2n+3)$-dimensional.
\end{theorem}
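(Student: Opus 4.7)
Since Lemma~\ref{lower} already gives $\dim X_G \ge 2n+3$, the content of the theorem is the matching upper bound $\dim X_G \le 2n+3$. I plan to prove it by induction on $n$ using the Henneberg characterization of Laman graphs: every Laman graph arises from a single edge by a sequence of two moves, (I)~adding a new vertex joined to two existing vertices, and (II)~deleting an edge $\{i,j\}$ and adding a new vertex $n$ joined to $i$, $j$, and a third vertex $k$. The base case $n=2$ is immediate, since $X_G$ is the hypersurface $\{g(u_1,u_2)=0\}\subset\C^8$, of dimension $7$.

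For move~(I), write $G=G'\cup\{(n,i),(n,j)\}$ with $G'$ Laman on $n-1$ vertices, and consider the projection $\pi_n:\hat X_G\to\hat X_{G'}$ that forgets $\ell_n$. The fiber over $x'\in\hat X_{G'}$ is the common zero set in $\C^4$ of the two polynomials $g(u_n,u_i)$ and $g(u_n,u_j)$. Both are irreducible (their common leading quadric $a_n d_n-b_n c_n$ is the smooth Klein/Plucker quadric in $4$ variables), and they are non-proportional whenever $u_i\ne u_j$, so their common zero has codimension $2$, i.e.\ dimension $2$. The fiber-dimension theorem combined with the inductive bound $\dim X_{G'}\le 2(n-1)+3$ then gives $\dim X_G\le 2n+3$.

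Move~(II) is the main challenge. Here the naive projection $\hat X_G\to\hat X_{G-n}$ does not suffice, because $G-n=G'\setminus\{i,j\}$ is no longer Laman, so the inductive hypothesis does not directly bound $\dim X_{G-n}$. My plan is to pass to the auxiliary graph $G^*:=G\cup\{(i,j)\}=G'\cup\{(n,i),(n,j),(n,k)\}$ on the same $n$ vertices, and to observe that $\hat X_G\cap\{g(u_i,u_j)=0\}=\hat X_{G^*}$ with $G^*-n=G'$. The projection $\hat X_{G^*}\to\hat X_{G'}$ has as its fiber the $1$-parameter family of common transversals of $(\ell_i,\ell_j,\ell_k)$, at least when that triple is in general position. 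Then for each irreducible component $V$ of $\hat X_G$, either $V\subset\{g(u_i,u_j)=0\}$, in which case $V\subset\hat X_{G^*}$, or $V\cap\{g(u_i,u_j)=0\}$ is a codimension-one subvariety of $V$ lying in $\hat X_{G^*}$; in either case I aim to conclude $\dim V\le 2n+3$ from a suitable bound on $\dim\hat X_{G^*}$ together with the induction hypothesis on $G'$.

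The hardest step will be controlling the fiber-jumping phenomenon in the projection $\hat X_{G^*}\to\hat X_{G'}$: the fiber jumps from dimension~$1$ to dimension~$2$ over the subvariety of $\hat X_{G'}$ on which $(\ell_i,\ell_j,\ell_k)$ becomes concurrent or coplanar, in particular over the $(2n+1)$-dimensional locus $X_{K_{n-1}}\subset X_{G'}$ given by Lemma~\ref{complete}, where all of the lines $\ell_1,\dots,\ell_{n-1}$ are already concurrent or coplanar. A naive fiber-dimension bound over that locus contributes a piece of dimension up to $2n+3$ to $\hat X_{G^*}$, which is too large to combine with the codimension-one trick above. Overcoming this will require a more delicate decomposition of $\hat X_{G^*}$ into irreducible components, separately tracking the contributions of the generic-position locus (which behave well, giving $\dim \le 2n+2$) and of the degenerate locus, and using the Laman hereditary condition on subgraphs of $G'$ to ensure that the degenerate contributions are absorbed into the closure $X_{K_n}\subset X_G$, which is itself $(2n+3)$-dimensional by Lemma~\ref{complete}.
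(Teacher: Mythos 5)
Your base case and your treatment of move~(I) coincide with the paper's argument (the paper computes the two\-/dimensional fiber geometrically, by parametrizing a transversal of $\ell_1,\ell_2$ by its two intersection points, rather than via your pair of quadrics, but it is the same step). The divergence, and the problem, is in move~(II). Your plan is to cut $X_G$ by the hypersurface $\{g(u_i,u_j)=0\}$ inside $\C^{4n}$ and to bound $\dim V$ by $\dim\bigl(V\cap \{g(u_i,u_j)=0\}\bigr)+1$ for each component $V$ not contained in that hypersurface. To conclude $\dim V\le 2n+3$ this way you need $\dim\bigl(V\cap \{g(u_i,u_j)=0\}\bigr)\le 2n+2$, i.e.\ you need the relevant components of $X_{G^*}$ to have dimension at most $2n+2$. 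But, as you yourself observe, $X_{G^*}\supset X_{K_n}$ is $(2n+3)$-dimensional, and the components of $X_{G^*}$ lying over the locus where $(\ell_i,\ell_j,\ell_k)$ are concurrent or coplanar can reach dimension $2n+3$. Your proposed repair --- showing that these degenerate contributions are ``absorbed into $X_{K_n}$'' --- does not close the gap even if it is carried out: containment in $X_{K_n}$ only bounds their dimension by $2n+3$, and the codimension-one trick then yields $\dim V\le 2n+4$, one more than needed. (A secondary issue: the fiber jumps over the entire locus where the triple $(\ell_i,\ell_j,\ell_k)$ degenerates, not just over $X_{K_{n-1}}$, and at this stage of the induction you do not know that the top-dimensional components of $X_{G'}$ avoid that locus; there is also no guarantee that $V\cap\{g(u_i,u_j)=0\}$ is contained in $X_{G^*}$ at all, since components of the intersection may lie entirely in the locus of coincident lines.) So the argument for move~(II) is genuinely incomplete.

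The paper avoids all of this by never cutting with a hypersurface in the total space. It projects $X_G$ to $X_{G'_{12}}$, where $G'_{12}=G'\setminus\{1,2\}$ lives on the $n-1$ old vertices, observes that $X_{G'_{12}}=\cl(\pi(X_G))$ and that $X_{G'}=X_{G'_{12}}\cap Z(g)$, and invokes Krull's principal ideal theorem (Lemma~\ref{pre:dim}): either $X_{G'_{12}}=X_{G'}$, in which case $\dim X_{G'_{12}}\le 2n+1$ and even the worst-case fiber bound of $2$ from Lemma~\ref{3lines} gives $2n+3$; or $\dim X_{G'_{12}}\le 2n+2$, in which case there exists a point $y_0\in\hat X_{G'_{12}}\setminus Z(g)$, over which $\ell_1$ and $\ell_2$ do not meet and hence the fiber is only one-dimensional by Lemma~\ref{3lines}. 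Since Lemma~\ref{pre:fib} bounds $\dim X$ by $\dim\cl(\pi(X))$ plus the dimension of \emph{any single} fiber over the image, one is free to evaluate it at this favorable $y_0$, and the fiber-jumping over the degenerate locus becomes irrelevant. That dichotomy-plus-good-point device is the idea your proposal is missing; I would replace your $G^*$/hypersurface step with it.
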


It follows from Theorem~\ref{main} that, for every graph $G$ on $n$ vertices that contains a subgraph $G'\subset G$ which is Laman, 
we have $\dim X_G=2n+3$. 
Indeed, we have $G'\subset G\subset K_n$, which implies that $ X_{K_n}\subset X_G\subset X_{G'}$, and, in particular,
$\dim X_{K_n} \le \dim X_G\le \dim X_{G'}$. Combining Theorem~\ref{main} and Lemma~\ref{complete},
the claim follows. 
In Section~\ref{sec:nec} we show (in Theorem~\ref{mainnec}) that it is also necessary for $G$ to contain a subgraph 
which is Laman, in order for $X_G$ to be ($2n+3$)-dimensional. We thus obtain the following
characterization.
\begin{corollary}\label{cor:main}
Let $G$ be a graph on $n$ vertices.
Then $X_G$ is $(2n+3)$-dimensional
if and only if there exists a subgraph $G'\subset G$ which is Laman. 
\end{corollary}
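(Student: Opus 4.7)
The corollary is an equivalence, and I would treat the two directions separately.

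The ``if'' direction is essentially a dimension sandwich that I can carry out directly from what is already in the excerpt. Suppose $G'\subset G$ is a Laman subgraph on $[n]$. The graph containments $G'\subset G\subset K_n$ translate into reverse containments at the level of varieties: adding edges only strengthens the system of incidence equations $g(u_i,u_j)=0$ cutting out $\hat X_G$ (and hence $X_G$), so we obtain $X_{K_n}\subset X_G\subset X_{G'}$. Lemma~\ref{complete} gives $\dim X_{K_n}=2n+3$, and Theorem~\ref{main} gives $\dim X_{G'}=2n+3$; since dimension is monotone under inclusion of algebraic sets, this pins $\dim X_G=2n+3$. (Alternatively, one can invoke Lemma~\ref{lower} directly for the lower bound.)

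The ``only if'' direction is the substantive half, and as the paragraph preceding the corollary signals, it is the content of Theorem~\ref{mainnec}, deferred to Section~\ref{sec:nec}. My plan for that direction would be a contrapositive argument: assuming $G$ has no spanning Laman subgraph, I would exhibit a strictly positive extra dimension in $X_G$. By the classical theorem of Laman, the presence of a spanning Laman subgraph in $G$ is equivalent to $G$ being generically rigid in the plane, so its absence means that a generic placement of the vertices of $G$ in $\R^2$ with the prescribed edge lengths admits a continuous family of non-congruent flexes. I would then invoke the Elekes--Sharir framework alluded to in the introduction to transport each such one-parameter family of planar flexes into a one-parameter family of realizations of $G$ by lines in $\C^3$, producing an extra degree of freedom in $X_G$ beyond the $2n+3$ dimensions already contributed by $X_{K_n}$.

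The hard part will be precisely this necessity direction: converting the combinatorial failure of Laman's condition into an honest extra algebraic dimension requires both the Elekes--Sharir translation and a careful genericity and dimension-counting argument, none of which is available yet in the excerpt. By contrast, the sufficiency direction is immediate once Theorem~\ref{main} and Lemma~\ref{complete} are in hand, reducing to monotonicity of dimension along the chain $X_{K_n}\subset X_G\subset X_{G'}$.
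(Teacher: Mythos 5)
Your proposal is correct and follows essentially the same route as the paper: the sufficiency direction is exactly the paper's sandwich $X_{K_n}\subset X_G\subset X_{G'}$ combined with Lemma~\ref{complete} and Theorem~\ref{main}, and your outline of the necessity direction (no spanning Laman subgraph $\Rightarrow$ flexible $\Rightarrow$ an extra dimension transported into $X_G$ via the Elekes--Sharir map) is precisely what the paper carries out in Section~\ref{sec:nec} through the variety $V_G$ of pairs of embeddings with equal edge lengths and Lemmas~\ref{rigidlaman}, \ref{rigidpairs} and~\ref{WX}. The only bookkeeping difference is that the paper obtains the extra dimension by showing the fibers of the projection $V_G\to\R^{2n}$ are at least four-dimensional (three for congruences plus one for the flex), rather than arguing directly inside $X_G$.
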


For the proof of Theorem~\ref{main} we use the following constructive characterization
of Laman graphs (called the Henneberg or Henneberg--Laman construction; 
see Jackson and Jord\'an~\cite[Corollary 2.12]{JJ05} and references therein).
In the statement, a $0$-{\it extension} means adding a new vertex of degree $2$ to $G$, and a $1$-{\it extension} 
means subdividing an edge $uv$ of $G$ by a new vertex $z$, and adding a new edge $zw$ for some $w\neq u, v$.
That is, the edge set of the new graph is $(E\setminus \{uv\})\cup \{uz,vz,wz\}$.
\begin{theorem}[Henneberg--Laman]\label{constLaman}
A graph $G = (V ,E)$ is Laman if and only if $G$ can be obtained from $K_2$
by a sequence of $0$- and $1$-extensions.
\end{theorem}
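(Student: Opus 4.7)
The plan is to prove both implications by induction, with the \emph{if} direction being a routine bookkeeping argument and the \emph{only if} direction carrying the substance. For the \emph{if} direction, induct on the number of extensions used to build $G$ from $K_2$. The base case $K_2$ satisfies (i) and (ii). Both a $0$-extension and a $1$-extension add one vertex and net two edges, so (i) is preserved. Property (ii) is verified for each subgraph $G''$ of the enlarged graph: if the new vertex $z$ is not in $V''$, then $G''$ is a subgraph of the smaller Laman graph and (ii) is inherited; if $z\in V''$, the inequality follows from (ii) applied to $G''-z$ together with the degree bound on $z$ in $G''$ (at most $2$ for a $0$-extension, at most $3$ for a $1$-extension, where the absence of the old edge $uv$ compensates when both $u,v\in V''$).

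For the \emph{only if} direction I induct on $n=|V|$. The base case $n=2$ is $K_2$. For the inductive step, since $|E|=2n-3$ the degree sum is $4n-6$, so some vertex $v$ has $\deg(v)\le 3$; applying (ii) to the pair $\{v,u\}$ for any neighbor $u$ of $v$ forces $\deg(v)\ge 2$. If some vertex $v$ has $\deg(v)=2$, then $G-v$ has $2(n-1)-3$ edges, inherits (ii) from $G$, and is therefore Laman; $G$ is then a $0$-extension of $G-v$ and induction finishes.

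The remaining, substantive case is when no vertex has degree $2$; pick $v$ of degree $3$ with neighbors $a,b,c$. I claim that some pair $\{x,y\}\subset\{a,b,c\}$ is a non-edge of $H:=G-v$ and that $H+xy$ is Laman, in which case $G$ is the $1$-extension of $H+xy$ that subdivides $xy$ by $v$ and connects $v$ to the third neighbor. Since $|E(H)|=2(n-1)-4$, adding any single edge restores (i), so the obstruction to adding $xy$ is a \emph{tight} subgraph $H_{xy}\subseteq H$ containing $x$ and $y$, where tight means $|E(H_{xy})|=2|V(H_{xy})|-3$. Assume for contradiction that all three additions fail, giving tight subgraphs $H_{ab},H_{ac},H_{bc}$. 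The key tool is the submodular inequality
\[
\delta(H_1\cup H_2)+\delta(H_1\cap H_2)\le \delta(H_1)+\delta(H_2),
\]
for $\delta(H):=2|V(H)|-3-|E(H)|$, valid whenever $|V(H_1)\cap V(H_2)|\ge 2$; combined with $\delta\ge 0$ on all subgraphs of $H$, it forces unions of tight subgraphs with at least two common vertices to remain tight. A short case analysis on the sizes of the three pairwise intersections produces in every case a tight subgraph $H^*\subseteq H$ whose vertex set contains $\{a,b,c\}$; in the extreme subcase where all three pairwise intersections are singletons, $H^*=H_{ab}\cup H_{ac}\cup H_{bc}$ itself is tight by direct edge counting. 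But then the induced subgraph of $G$ on $V(H^*)\cup\{v\}$ has at least $|E(H^*)|+3=2|V(H^*)|$ edges on $|V(H^*)|+1$ vertices, contradicting (ii) for $G$. The main obstacle is exactly this combining step: the submodular argument is clean when all intersections are large, and the singleton-intersection subcases must be handled separately via the third tight subgraph and direct computation.
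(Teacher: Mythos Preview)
The paper does not prove Theorem~\ref{constLaman}; it quotes it as a known result (the Henneberg--Laman construction, with a reference to Jackson and Jord\'an~\cite[Corollary~2.12]{JJ05}) and uses it as a black box in the proof of Theorem~\ref{main}. So there is no ``paper's own proof'' to compare against.

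Your argument is the standard proof and is essentially correct. The \emph{if} direction is handled cleanly. For the \emph{only if} direction, your case split is the right one, and the submodularity/modularity of $\delta$ together with the direct count in the all-singleton-intersection subcase indeed produces a tight $H^*\supseteq\{a,b,c\}$, after which adjoining $v$ violates~(ii) in $G$. One small slip: the sentence ``applying (ii) to the pair $\{v,u\}$ for any neighbor $u$ of $v$ forces $\deg(v)\ge 2$'' does not say what you want --- property~(ii) on a two-vertex set only bounds the number of edges between $v$ and $u$ by~$1$. The correct (and equally short) argument is that if $\deg(v)\le 1$ then $G-v$ has at least $2n-4=2(n-1)-2$ edges on $n-1$ vertices, contradicting~(ii). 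It is also worth noting explicitly that if $xy\in E(H)$ then the two-vertex subgraph on $\{x,y\}$ is already tight, so your tight-obstruction framework automatically rules out adding an existing edge; hence the ``non-edge'' requirement need not be checked separately.
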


We observe the following simple property.
\begin{lemma}\label{3lines}
Let $\ell_1,\ell_2,\ell_3$ be three distinct lines in $\C^3$. Consider the variety $W$ of lines $\ell\subset\C^3$ that intersect 
each of $\ell_i$, for $i=1,2,3$ (similar to above, $W$ can be interpreted as a 
variety in $\C^4$). If $\ell_1,\ell_2,\ell_3$ are either concurrent or coplanar, $W$ is two-dimensional.
Otherwise, $W$ is one-dimensional.
\end{lemma}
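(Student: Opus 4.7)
The plan is to realize $W$ inside the $4$-dimensional parameter space $\C^4$ of non-horizontal lines as the intersection of the three hypersurfaces $\{u\in\C^4\mid g(u,u_i)=0\}$, $i=1,2,3$, cut out by the incidence polynomial \eqref{intersect}. Since $W$ is defined by three polynomial equations in four variables, $\dim W\ge 1$ for free, so the work splits into producing extra dimension of $W$ in the degenerate cases and ruling it out in the non-degenerate case.

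For the degenerate direction I would exhibit explicit $2$-dimensional subfamilies of $W$, mirroring the computation in Lemma~\ref{complete}. If $\ell_1,\ell_2,\ell_3$ all pass through a common point $p$ or are mutually parallel, every line through $p$ (respectively every line parallel to the common direction) meets all three and forms a $2$-dimensional family; if the three lines lie in a common plane $\pi$, the $2$-dimensional family of lines contained in $\pi$ lies entirely in $W$. Either way, $\dim W\ge 2$. A short supplementary check, essentially the same case analysis used below, shows that no line outside these families can lie in $W$, giving $\dim W=2$ exactly.

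For the non-degenerate case, I would analyze $W$ by first describing the variety $W_{12}$ of lines meeting both $\ell_1$ and $\ell_2$: it is $2$-dimensional, and when $\ell_1$ and $\ell_2$ meet at a single point $p$ (or are parallel) it splits into the pencil of lines through $p$ (respectively lines parallel to the common direction) together with the family of lines contained in $\pi:=\mathrm{span}(\ell_1,\ell_2)$; when $\ell_1,\ell_2$ are skew, $W_{12}$ is instead an irreducible $2$-dimensional variety (the classical regulus). The non-concurrent/non-coplanar hypothesis gives $p\notin\ell_3$ and $\ell_3\not\subset\pi$ in the first case, so the condition ``meets $\ell_3$'' cuts each irreducible component of $W_{12}$ down to dimension~$1$; in the pairwise-skew subcase one obtains the same dimension drop, or alternatively invokes the classical fact that three pairwise skew lines lie on a unique doubly-ruled quadric whose opposite ruling is the $1$-parameter family of lines meeting all three. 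The main obstacle is the properness of this dimension drop, which reduces to producing, in each irreducible component of $W_{12}$, at least one line that misses $\ell_3$; the non-degeneracy hypothesis is precisely what supplies such a line.
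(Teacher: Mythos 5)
Your proposal is correct and follows essentially the same route as the paper: a case analysis on the mutual position of $\ell_1,\ell_2,\ell_3$, identifying the transversal families explicitly (pencil through the common point, lines in the common plane, the regulus for pairwise skew lines, and the mixed case where exactly one pair meets). Your framing via $W_{12}$ and hypersurface intersections merely repackages the paper's direct description of $W$ in each case, and the deferred ``supplementary check'' is exactly the easy verification the paper carries out.
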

\begin{proof}
If $\ell_1,\ell_2, \ell_3$ are both concurrent and coplanar, then each $\ell\in W$ must either pass through their common point
or lie on their common plane.  If $\ell_1,\ell_2, \ell_3$ are concurrent and not coplanar, then 
each $\ell\in W$ must pass through their common point. If the they are coplanar but not concurrent, 
each $\ell\in W$ must lie on their common plane. In each of these scenarios, $\ell$ has two degrees of freedom.

If the three lines are pairwise skew, $\ell\in W$ must be a generator line of the regulus that they span, 
and then has only one degree of freedom.

It is left to handle the case where $\ell_1,\ell_2,\ell_3$ are neither concurrent nor coplanar, and a pair of them, say, $\ell_1,\ell_2$, is concurrent.
Then $\ell_3$ intersects their common plane in a single point $q$, which is not $\ell_1\cap \ell_2$.
Then each $\ell\in W$ must either pass through $\ell_1\cap\ell_2$ and an arbitrary point of $\ell_3$, or pass through $q$
and lie on the common plan spanned by $\ell_1,\ell_2$. In either case $\ell$ has one degree of freedom.
\end{proof}

\subsection{Proof of Theorem~\ref{main}}
By Lemma~\ref{lower}, $X_G$ is at least $(2n+3)$-dimensional. So it suffices to show that, in case $G$ is Laman, 
$X_G$ is at most $(2n+3)$-dimensional.

We use induction on the number $n\ge 2$ of vertices of $G$. 
For the base case $n=2$, the only graph to consider is $G=K_2$, for which $X_G$ is 7-dimensional, by Lemma~\ref{complete}.
Assume the correctness of the statement for each $2\le n'<n$ and let 
$G$ be a Laman graph on the vertex set $[n]$.
By Theorem~\ref{constLaman}, $G$ can  be obtained from $K_2$
by a sequence of $N=n-2$ extensions (note that each extension adds one new vertex to the graph).
Fix such a sequence, and let $G'$ be the graph obtained after applying the first $N-1$ 
extensions in the sequence. So $G$ can be obtained 
from $G'$ by applying a $0$- or $1$-extension to $G'$, and $G'$ is Laman.
Up to renaming the vertices, we may assume that the vertex set of $G'$ is $[n-1]$. 
By the induction hypothesis $\dim X_{G'}=2(n-1)+3=2n+1$.

Suppose first that $G$ is obtained from $G'$ by a $0$-extension, that is, by adding a new vertex, $n$, to $G'$ and two edges connecting 
$n$ to some pair of vertices, say, $1,2$, of $G'$. 

Let $\pi:\C^{4n}\to \C^{4n-4}$ denote that projection of $\C^{4n}$ onto its first $4n-4$ coordinates.
We claim that
$
\pi(\hat X_G)=\hat X_{G'}.
$
Indeed, for every sequence of lines $L=(\ell_1,\ldots,\ell_{n-1},\ell_n)$ with $G\subset G_L$, removing the line 
$\ell_n$ results in a sequence $L':=(\ell_1,\ldots,\ell_{n-1})$ with $G'\subset G_{L'}$; so  $\pi(\hat X_G)\subseteq \hat X_{G'}$.
Conversely, for every sequence $L'=(\ell_1,\ldots,\ell_{n-1})$ with $G'\subset G_{L'}$, there exists a line $\ell_n$ 
(in fact, a two-dimensional family of lines --- see below) that intersects both $\ell_1,\ell_2$, and thus, for 
$L:=(\ell_1,\ldots,\ell_{n-1},\ell_n)$, we have $G\subset G_L$; so also $\hat X_{G'}\subseteq \pi(\hat X_G)$.
This implies $\pi(\hat X_G)=\hat X_{G'}$, as claimed.

An easy property in algebraic geometry, given in Lemma~\ref{pre:proj} in the Appendix, implies that
$$
X_{G'}=\cl(\hat X_{G'})=\cl(\pi(\hat X_G))=\cl (\pi(X_G)).
$$
Note also that for every $y\in \hat X_{G'}$, we have that $\pi^{-1}(y)\cap X_G$ is two-dimensional.
Indeed, as was already noted, writing $L(y)=(\ell_1,\ldots,\ell_{n-1})$, 
we have $x\in \pi^{-1}(y)\cap X_G$
for every  $L(x)$ of the form
$L(x)=(\ell_1,\ldots,\ell_{n-1},\ell_n)$, where 
the first $n-1$ lines are fixed and the line $\ell_n$ is any line that intersects both $\ell_1,\ell_2$.
The set of such lines is two-dimensional.
Indeed, each such line can be parameterized by the pair of points of its intersection with $\ell_1$ and $\ell_2$,
except for lines that pass through the intersection point $\ell_1\cap \ell_2$, if such a point exists.
However, the space of lines of this latter kind is also two-dimensional, and the claim follows.

Let $X$ be any irreducible component of $\hat X_{G}$. Since $X_G=\cl(\hat X_G)$, we must
have $X\cap \hat X_G\neq \emptyset$, for every such component $X$ (otherwise, the union of the irreducible components of $X_G$,
excluding $X$, already contains $\hat X_G$). 
Let $y\in \pi(X\cap \hat X_G)\subset \hat{X}_{G'}$.
By another basic property in algebraic geometry, given in Lemma~\ref{pre:fib} in the Appendix, we have,
for each $y\in \hat X_{G'}$,
$$
\dim X
\le \dim \pi(X)+\dim (\pi^{-1}(y)\cap X)
\le \dim X_{G'}+\dim (\pi^{-1}(y)\cap X_G)\le 2n+3. 
$$
Since this holds for every irreducible component $X$ of $X_G$, we get $\dim X_G\le 2n+3$, 
which completes the proof of the theorem for this case.

Assume next that $G$ is obtained from $G'$ by applying a $1$-extension, that is, 
by subdividing an edge, say $\{i,j\}$ of $G'$ by a new vertex $n$, and adding a new
edge $\{n,k\}$ for some $k\neq i, j$. 
That is, $G$ is obtained from $G'$ by replacing the edge $\{i,j\}$ by the 
three edges $\{i,n\}$, $\{j,n\}$, and $\{k,n\}$.
Without loss of generality assume $i=1$, $j=2$, $k=3$.
Consider the graph $G'_{12}$ resulting by
removing the edge $\{1,2\}$ from $G'$, and the corresponding variety $X_{G'_{12}}$. 

Arguing similar to above, we have $X_{G_{12}'}=\cl (\pi(X_G))$.
Indeed, for every sequence of lines $L=(\ell_1,\ldots,\ell_{n-1},\ell_n)$ with $G\subset G_L$, removing the line 
$\ell_n$ results in a sequence $L':=(\ell_1,\ldots,\ell_{n-1})$ with $G_{12}'\subset G_{L'}$; so  $\pi(\hat X_G)\subseteq \hat X_{G'}$.
Conversely, for every sequence $L'=(\ell_1,\ldots,\ell_{n-1})$ with $G_{12}'\subset G_{L'}$, there exists a line $\ell_n$ 
(in fact, a family of lines which is at least one-dimensional, by Lemma~\ref{3lines}) 
that intersects each of $\ell_1,\ell_2,\ell_3$, and thus, for 
$L:=(\ell_1,\ldots,\ell_{n-1},\ell_n)$, we have $G\subset G_L$; so also $\hat X_{G'}\subseteq \pi(\hat X_G)$.
This implies $\pi(\hat X_G)=\hat X_{G'}$. Applying Lemma~\ref{pre:proj} in the Appendix, we get 
$X_{G_{12}'}=\cl (\pi(X_G))$, as claimed.

Let $g$ be the polynomial in \eqref{intersect} in the first eight coordinates of $\C^{4n-4}$, 
which vanishes if and only if the pair of lines $\ell_1,\ell_2$
associated with those 8 coordinates intersect.
Let $Z(g)$ denote the zero-set of $g$ in $\C^{4n-4}$.
Clearly, $X_{G'}= X_{G'_{12}}\cap Z(g)\subset X_{G'_{12}}$.

Assume, without loss of generality, that $X_{G'_{12}}$ is irreducible 
(otherwise apply the same argument to each of its irreducible components).
By Lemma~\ref{pre:dim} in the Appendix, we have either $X_{G'}=X_{G'_{12}}$ 
and thus $\dim X_{G'_{12}}=\dim X_{G'}\le 2n+1$,
 or, otherwise,  $\dim X_{G'}=\dim X_{G'_{12}}-1$.

Suppose that the former case occurs, that is, $\dim X_{G'_{12}}\le 2n+1$. 
It follows from Lemma~\ref{3lines} that
$\dim (\pi^{-1}(y)\cap X_G)\le 2$, for every $y\in \hat X_{G'_{12}}$. 
Lemma~\ref{pre:fib} then implies that
$$
\dim X_G\le  \dim X_{G_{12}'}+\dim (\pi^{-1}(y)\cap X_G)\le 2n+1+2=2n+3,
$$
as needed.

Suppose next that the latter case occurs, that is,  
$$
\dim X_{G'}=\dim (X_{G'_{12}}\cap Z(g))=\dim X_{G'_{12}}-1
$$
(and, in particular, $\dim X_{G'_{12}}\le 2n+2$).

Choose a point $y_0\in \hat X_{G'_{12}}\setminus Z(g)$, and write 
$L(y_0):=(\ell_1,\ldots,\ell_{n-1})$. Since $y_0\not\in Z(g)$, we have in particular that 
the lines $\ell_1,\ell_2,\ell_3$ of $L(y_0)$ are neither all concurrent nor all coplanar.
By Lemma~\ref{3lines}, we have  $\dim(\pi^{-1}(y_0)\cap X_G)=1$.
Applying Lemma~\ref{pre:fib} once again, we get 
$$
\dim X_G\le \dim X_{G'_{12}}+\dim (\pi^{-1}(y_0)\cap X_G)\le 2n+2+1=2n+3.
$$
This completes the proof.
\hfill$\square$

\section{Hendrickson graphs}
In this section we introduce a characterization for graphs $G$, such that $G\subset G_L$ guarantees that the lines of $L$ are either 
all concurrent or all coplanar. As already discussed in the introduction, for this we need to restrict ourselves 
only to {\it generic} configurations $L$,
defined as follows.
\begin{definition}\label{def:generic}
Let $G$ be a graph and put $k:=\dim X_G$ . A point $x\in X_G$ is called {\it generic} if it is a regular point of a $k$-dimensional 
irreducible component of $X_G$.
\end{definition}

The following theorem is the main result of this section, preceded a definition needed for its statement. 
\begin{definition}\label{def:redundant}
A graph $G = (V ,E)$ is called {\it redundant} if, for every edge $e\in E$, 
there exists a subgraph $G'\subset G\setminus\{e\}$ which is Laman.  
A graph $G$ is called a {\it Hendrickson graph} if it is redundant and $3$-(vertex-)connected.\footnote{Recall 
that a graph $G$ is $3$-(vertex-)connected if a removal of any pair of vertices of $G$ results in a connected graph.}
\end{definition}

\begin{theorem}\label{main2}
Let $G$ be a  Hendrickson graph on $n$ vertices, and let $X$ be any $(2n+3)$-dimensional irreducible component of $X_G$.
Then, for every $x\in X$, the lines of $L(x)$ are either all concurrent or all coplanar.
\end{theorem}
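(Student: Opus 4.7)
My plan is to strengthen the conclusion by showing that the varieties $X_{\rm conc}, X_{\rm copl}\subset\C^{4n}$---defined as the Zariski closures of the sets of sequences of $n$ pairwise distinct concurrent, and respectively coplanar, lines---are the \emph{only} $(2n+3)$-dimensional irreducible components of $X_G$. Since every point of $X_{\rm conc}$ (resp.\ $X_{\rm copl}$) corresponds to a concurrent (resp.\ coplanar) configuration, this implies the theorem. Both $X_{\rm conc}$ and $X_{\rm copl}$ are irreducible of dimension $2n+3$ (as in the proof of Lemma~\ref{complete}) and contained in $X_{K_n}\subset X_G$, and by Corollary~\ref{cor:main}, $\dim X_G=2n+3$ since $G$ being redundant forces $G$ to contain a Laman subgraph; hence both varieties are already maximal-dimensional irreducible components of $X_G$, and the task is to rule out any other.

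The proof then proceeds by induction on $n=|V(G)|$, using a Henneberg-type inductive characterization of Hendrickson graphs (analogous to Theorem~\ref{constLaman}, and in the spirit of Jackson--Jord\'an): every Hendrickson graph on $n\ge 5$ vertices arises from a smaller Hendrickson graph by either adding an edge or performing a $1$-extension, with each intermediate graph Hendrickson. The base case is $G=K_4$ ($n=4$), handled directly by Lemma~\ref{complete}, whose proof exhibits the three components of $X_{K_4}$ and shows that only two have the maximal dimension $2n+3$.

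For the inductive step, let $X$ be a $(2n+3)$-dim irreducible component of $X_G$. If $G=G'+\{e\}$ is obtained by edge addition, then $X\subset X_{G'}$ and the equality $\dim X=2n+3=\dim X_{G'}$ places $X$ inside a $(2n+3)$-dim irreducible component of $X_{G'}$, which by induction is $X_{\rm conc}$ or $X_{\rm copl}$; by maximality, $X$ equals it. If instead $G$ arises from a Hendrickson graph $G'$ on $n-1$ vertices by a $1$-extension subdividing an edge $\{u,v\}$ by a new vertex $z$ and attaching $z$ to some $w$, I mimic the $1$-extension case in the proof of Theorem~\ref{main}: project $\pi:\C^{4n}\to\C^{4(n-1)}$ by forgetting $\ell_z$, so that $\pi(X)\subset X_{G'\setminus\{u,v\}}$. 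Lemma~\ref{pre:fib} combined with Lemma~\ref{3lines} forces $\dim\pi(X)=2n+1$ and forces $\ell_u,\ell_v,\ell_w$ to be concurrent or coplanar generically on $\pi(X)$; in particular $g_{\{u,v\}}$ vanishes identically on $\pi(X)$, so $\pi(X)\subset X_{G'}$ is a $(2n+1)$-dim irreducible component of $X_{G'}$. By the induction hypothesis applied to $G'$, $\pi(X)$ equals the concurrent or coplanar variety inside $\C^{4(n-1)}$. In the concurrent case, a generic $y\in\pi(X)$ has $\ell_u,\ell_v,\ell_w$ meeting at a common point $p$ and not coplanar, so Lemma~\ref{3lines} forces every $\ell_z$ in the corresponding fiber to pass through $p$; hence $L(x)$ is concurrent for every $x\in X$, and by dimension $X=X_{\rm conc}$. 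The coplanar case is symmetric, yielding $X=X_{\rm copl}$.

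The main obstacle is the availability of the Henneberg-type inductive characterization of Hendrickson graphs used to drive the induction. This combinatorial result---essentially the core of Jackson--Jord\'an's proof of the 2D case of Hendrickson's conjecture on generic global rigidity---is nontrivial. Making the proof self-contained would require establishing directly that every Hendrickson graph on $n\ge 5$ vertices admits either a removable edge or an inverse $1$-extension producing a smaller Hendrickson graph; once such a reduction is in hand, the geometric inductive step above proceeds cleanly using only Lemma~\ref{pre:fib} and Lemma~\ref{3lines}.
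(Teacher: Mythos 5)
Your proposal follows the same skeleton as the paper's proof: induction driven by the Jackson--Jord\'an construction of Hendrickson graphs from $K_4$ by edge additions and $1$-extensions, the trivial edge-addition step, and, for a $1$-extension, the projection $\pi$ forgetting the new line together with Lemmas~\ref{pre:fib} and~\ref{3lines} to show that $\cl(\pi(X))$ is a $(2n+1)$-dimensional component of $X_{G'}$ on which the induction hypothesis applies. The ``main obstacle'' you flag is not an obstacle in this setting: the required constructive characterization is exactly Theorem~\ref{constJJ}, which the paper imports verbatim from Jackson--Jord\'an \cite{JJ05}, just as it imports Theorem~\ref{constLaman}; neither proof attempts to reprove it. Where you genuinely diverge is the endgame of the $1$-extension case. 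The paper keeps the weaker induction hypothesis (``every point of a maximal component is concurrent or coplanar''), takes a \emph{regular} point $x\in X$, and rules out the bad case ($\ell_n$ missing the common point/plane) by an explicit local dimension count showing that such configurations fill at most $2n+2$ dimensions, contradicting regularity. You instead strengthen the induction hypothesis to ``the only maximal components are $X_{\rm conc}$ and $X_{\rm copl}$'' (which is equivalent given the decomposition in Lemma~\ref{complete}), and then argue at a \emph{generic} point of the base $\cl(\pi(X))=X_{\rm conc}$, where $\ell_u,\ell_v,\ell_w$ are concurrent and non-coplanar, so Lemma~\ref{3lines} forces the new line through the common point; density of these fibers in the irreducible $X$ plus closedness of $X_{\rm conc}$ then gives $X=X_{\rm conc}$. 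This is a clean and arguably simpler substitute for the paper's local dimension count, and it buys you the sharper conclusion identifying the components. Two small bookkeeping points: the induction should be on the length of the construction sequence (or on $(n,|E|)$), not on $n$ alone, since edge addition does not decrease $n$; and, like the paper, you silently treat ``parallel'' contacts as intersections, which is harmless since that locus is lower-dimensional and your density argument absorbs it.
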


For the proof of Theorem~\ref{main2} we use the following constructive characterization
of Hendrickson graphs obtained by Jackson and Jord\'an~\cite{JJ05}.
\begin{theorem}[Jackson and Jord\'an~\cite{JJ05}]\label{constJJ}
Every Hendrickson graph 
can be built up from $K_4$ by a sequence of edge additions and $1$-extensions.
\end{theorem}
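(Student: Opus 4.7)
The plan is to prove both implications by induction. The forward implication (that every graph obtained from $K_4$ by edge additions and $1$-extensions is Hendrickson) is the easier direction. I would first verify by direct computation that $K_4$ is Hendrickson: it is clearly 3-connected, and removing any one of its six edges leaves a graph on 4 vertices with 5 edges which is easily checked to satisfy the Laman count $|E'| \le 2|V'|-3$ on every subgraph. Then I would show that edge additions trivially preserve both 3-connectivity and redundancy (any Laman subgraph witnessing redundancy for an old edge remains a valid witness, and the newly added edge is witnessed by any spanning Laman subgraph of the original). For a $1$-extension adding a vertex $v$ of degree 3 by subdividing $uu'$ and attaching $w$, 3-connectivity is preserved because $v$ has three neighbors, and redundancy is verified by a short case analysis using Theorem~\ref{constLaman}: for each edge of the enlarged graph, a Laman witness in the smaller graph is extended by an appropriate $0$- or $1$-extension at $v$.

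The converse is the harder direction, and I would argue by induction on $|V(G)|$ with base case $K_4$. The main task is to show that every Hendrickson graph $G$ with $|V(G)| > 4$ admits a reverse operation: either an edge $e$ such that $G-e$ remains Hendrickson, or a vertex $v$ of degree exactly 3 with neighbors $\{x,y,z\}$ such that some $1$-reduction at $v$ (delete $v$, and add one of $xy$, $yz$, $xz$ not already present) yields a Hendrickson graph. The easier subcase is when $G$ contains a vertex of degree $\ge 4$: the redundancy inequality $|E| \ge 2|V|-2$ leaves slack, and one hopes to find an incident edge whose removal preserves both 3-connectivity (using Mader-style arguments) and redundancy (using that the rigidity matroid has room to spare). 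The harder subcase is when no such edge exists, in which case one is forced to perform a $1$-reduction at a degree-3 vertex.

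The main obstacle is proving that some $1$-reduction at a degree-3 vertex $v$ preserves both 3-connectivity and redundancy simultaneously. Not every choice of the reintroduced edge works: adding $xy$, $yz$, or $xz$ may create a 2-separator of the reduced graph, or may destroy redundant rigidity. The resolution goes through the generic 2-rigidity matroid $\mathcal{M}(G)$, in which redundant rigidity is equivalent to $G$ being an $\mathcal{M}$-connected graph (every edge lies on an $\mathcal{M}$-circuit spanning $[n]$). One shows by a circuit-exchange argument that among the three candidate edges $xy$, $yz$, $xz$, at least one can be substituted for the three edges incident to $v$ while preserving $\mathcal{M}$-connectedness; this uses submodularity of the Laman rank function and the fact that $v$ sits on a spanning circuit by redundancy. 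A separate case analysis, exploiting the 3-connectivity of $G$, rules out the creation of a 2-separator for at least one admissible choice.

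I expect the circuit-exchange argument combined with the 2-separator analysis to be by far the most delicate step, and it is the step where a naive induction fails. The remaining pieces (the base case, preservation under the forward operations, and the degree-counting dichotomy) are comparatively routine, and can be made precise once the matroid-theoretic machinery of generic rigidity is in hand, as developed by Jackson and Jord\'an~\cite{JJ05}.
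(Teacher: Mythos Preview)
The paper does not prove this theorem at all: it is quoted as a result of Jackson and Jord\'an~\cite{JJ05} and used as a black box in the proof of Theorem~\ref{main2}. So there is no ``paper's own proof'' to compare against; the intended ``proof'' here is simply the citation.

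Your proposal, by contrast, is a sketch of how the Jackson--Jord\'an theorem itself is proved. As such it is broadly faithful to the structure of~\cite{JJ05}: induction on $|V|$, with the inductive step producing an inverse operation (edge deletion or $1$-reduction) that returns a smaller Hendrickson graph, and the heart of the matter being the simultaneous preservation of $3$-connectivity and redundant rigidity under some $1$-reduction, handled via connectivity in the generic $2$-rigidity matroid. Two small comments: (i) the theorem as stated is only one implication (Hendrickson $\Rightarrow$ buildable from $K_4$); the converse you also treat is the content of the Remark following the statement, not of the theorem itself; (ii) your degree dichotomy (``vertex of degree $\ge 4$ versus not'') is not quite how~\cite{JJ05} organizes the case analysis---they work throughout with $M$-connected components and ear decompositions rather than a clean high-degree/low-degree split---but the essential obstacle you isolate (finding a $1$-reduction that keeps the graph $M$-connected and $3$-connected) is exactly the one they overcome.
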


\noindent {\it Remark.} Note that applying a $1$-extension or an edge addition preserves 
the property of being Hendrickson.

\subsection{Proof of Theorem~\ref{main2}}
Let $G$ be a Hendrickson graph on the vertex set $[n]$.
By Theorem~\ref{constJJ}, there exists a sequence $G_0,\ldots, G_N$ of Hendrckson (i.e., $3$-connected and redundant) graphs, 
such that $G_0=K_4$, $G_N=G$, and $G_i$ is obtained 
from $G_{i-1}$ by an edge addition or a $1$-extension,
for each $i=1,\ldots, N$.

We use induction on $N$. 
The base case $N=0$ and $G_0=K_4$ is trivial, because a simple case analysis shows that 
every four distinct lines in $\C^3$ which pairwise intersect are either all concurrent or all coplanar.
Assume the correctness of the statement for $0\le N'<N$, and assume that
$G$ is obtained from $G':=G_{N-1}$ by either adding a new edge or by applying a $1$-extension to $G'$;
by the remark following Theorem~\ref{constJJ}, $G'$ is $3$-connected and redundant.

Consider first the case where $G$ is obtained from $G'$ by adding a new edge, say $\{1,2\}$.
By assumption, each of $G,G'$ contains a subgraph which is Laman, and thus 
each of $X_G$ and $X_{G'}$ is $(2n+3)$-dimensional, by Theorem~\ref{main}.
Clearly, $G'\subset G$ and so $X_G\subset X_{G'}$. 
In particular, every $(2n+3)$-dimensional irreducible component $X$ of $X_G$
is also a component of $X_{G'}$. By the induction hypothesis, for every regular point $x\in X$, 
the sequence $L(x)$ consists of either $n$ concurrent or $n$ coplanar lines. 
This completes the proof for this case.

Consider next the case where $G$ is obtained from $G'$ by a $1$-extension. Up to renaming the vertices, 
we may assume that $G'$ is a graph on the vertex set $[n-1]$ and $G$ is obtained 
from $G'$ by adding a new vertex $n$, and by replacing an edge $\{1,2\}$ by three edges 
$\{1,n\}$, $\{2,n\}$, $\{3,n\}$.
Let $G'_{12}$ be the graph obtained from $G'$ by removing its edge $\{1,2\}$.
Let $\pi:\C^{4n}\to \C^{4n-4}$ stand for the projection of $\C^{4n}$ onto its first $4n-4$ coordinates.
As argued  in the proof of Theorem~\ref{main}, we have
$X_{G'_{12}}=\cl (\pi(X_G))$.

Our assumption that $G'$ is redundant implies that $G'_{12}$ contains a subgraph which is Laman. 
Hence, each of $X_{G'_{12}}$, $X_{G'}$ is of dimension $2(n-1)+3=2n+1$, by Theorem~\ref{main}.
Note that $G$ is obtained from $G'_{12}$ by adding a new vertex, $n$, of degree $3$.

Let $X$ be a $(2n+3)$-dimensional irreducible component of $X_G$ and let 
$x$ be a regular point of $X\cap \hat X_G$ (note that this intersection is nonempty since $X\subset \cl(\hat X_G)$).
Let $X'$ be some irreducible component of $\cl(\pi(X))$ that contains $\pi(x)$ (if $\pi(x)$ lies on
more than one such irreducible component, choose $X'$ to be one with maximal dimension).

We claim that $X'$ is $(2n+1)$-dimensional. Indeed, for contradiction, assume that $X'$ is of dimension at most $2n$
($X'$ cannot have higher dimension, because $X_{G_{12}'}$ is $(2n+1)$-dimensional).
Note that, for every $y\in X'$, $\pi^{-1}(y)\cap X$ is at most two-dimensional, by Lemma~\ref{3lines}. 
Combined with Lemma~\ref{pre:fib}, we get
$$
\dim X\le \dim X'+\dim (\pi^{-1}(y)\cap X)\le 2n+2,
$$ 
which contradicts our assumption that $X$ is $(2n+3)$-dimensional.

We next claim that $X'$ is also an irreducible component of $X_{G'}$ (and not only of  $X_{G'_{12}}$);
that is, we claim that $X'\subset X_{G'}$.
Indeed, assume for contradiction that $X'\cap X_{G'}\neq X'$ 
(and hence, by Lemma~\ref{pre:dim}, the intersection is of dimension at most $\dim X'-1=2n$).
By the definition of $X_{G'_{12}}$, for every point $y\in X'\minus X_{G'}$, the first eight coordinates 
(which represent the first two lines in the sequence $L(y)$)
correspond to a pair of non-intersecting lines. 
In particular, $\pi^{-1}(y)\cap X$ is one-dimensional, by Lemma~\ref{3lines}.
Hence, using Lemma~\ref{pre:fib}, we have
$$
\dim X\le \dim X'+\dim (\pi^{-1}(y)\cap X)\le 2n+1+1=2n+2,
$$
which contradicts our assumption that $X$ is $(2n+3)$-dimensional.
Hence $X'\subset X_{G'}$, as claimed.

So $X'$ is a $(2n+1)$-dimensional irreducible component of $X_{G'}$. By the induction hypothesis, 
every point of $X'$ corresponds to a sequence of $n-1$ lines which are either all concurrent or all coplanar.
In particular, for the point $y:=\pi(x)$, we have that the $n-1$ lines of the sequence
$L(y)=(\ell_1,\ldots,\ell_{n-1})$ are either all concurrent or all coplanar. 
Since $x\in\pi^{-1}(y)\cap X$, we have that
$L(x)=(\ell_1,\ldots,\ell_{n-1},\ell_n)$, where the line $\ell_n$ intersects $\ell_1,\ell_2,\ell_3$.

Assume first that $(\ell_1,\ldots,\ell_{n-1})$ are all concurrent. If $\ell_n$ passes through the common intersection point
of $\ell_1,\ell_2,\ell_3$, which is the same intersection point  of $\ell_1,\ldots,\ell_{n-1}$, then the $n$ lines
of $L(x)$ are all concurrent. Otherwise, $\ell_1,\ell_2,\ell_3$ must lie on a common plane with $\ell_n$, and so
$\ell_1,\ell_2,\ell_3$ are both concurrent and coplanar. 
Moreover, by continuity, this is the case for every point $\xi$ in a sufficiently small open 
neighborhood of $x\in X$. Hence the {\it local dimension} 
of $x\in X$ is at most $2n+2$.
Indeed, the configurations involve 
$n-1$ concurrent lines where three of them are coplanar, and an $n$th line, coplanar with the first three lines.
To specify such a configuration, we need three parameters to specify the point $o$ of concurrency,
$2(n-2)$ parameters to specify $n-2$ of the lines through $o$, except for $\ell_3$, one to specify $\ell_3$,
and two to specify $\ell_n$, for a total of $3+2(n-2)+1+2=2n+2$.
Since $x$ is a regular point of $X$, its local dimension is (well defined and) equals to $\dim X=2n+3>2n+2$. 
This contradiction implies that the $n$ lines of $L(x)$ are all concurrent in this case.

Similarly, assume that $(\ell_1,\ldots,\ell_{n-1})$ are all coplanar. If $\ell_n$ lies on the plane $h$ that supports 
$\ell_1,\ell_2,\ell_3$, which is the same plane that supports $\ell_1,\ldots,\ell_{n-1}$, then the $n$ lines
of $L(x)$ are all coplanar. Otherwise, $\ell_1,\ell_2,\ell_3,\ell_n$ must be  concurrent, where
their common point is the unique intersection point of $\ell_n$ with $h$.
So $\ell_1,\ell_2,\ell_3$ are both concurrent and coplanar. 
Moreover, by continuity, this is the case for every point $\xi$ in a sufficiently small open 
neighborhood of $x\in X$. But then the local dimension of $x\in X$ is at most $2n+2$
(here we have $n-1$ coplanar lines where three of them are also concurrent, 
and an $n$th line concurrent with the first three lines, and the analysis is symmetric to the one given above).
Since $x$ is a regular point of $X$, this yields a contradiction, as above. 
Thus the $n$ lines of $L(x)$ are all coplanar in this case.

To recap, we have shown so far that in case $x\in X$ is a regular point, 
$L(x)$ is a sequence of $n$ lines that are either 
all concurrent or all coplanar. By continuity, this is the case for every point of $X$. 
This establishes the induction step, and thus completes the proof. 
\hfill$\square$


\section{Rigidity of planar embeddings of graphs}\label{sec:rigid}

\subsection{Definitions and basic properties}
In this section we introduce some basic definitions of the classical notion of 
combinatorial rigidity of graphs, focusing on planar embeddings. 
For more details about the notions being reviewed here, see  \cite{AR1,Con05} and references therein.

Let $G=(V,E)$ be a graph with $n$ vertices and $m$ edges, 
and write $(v_1,\ldots,v_n)$ and $(e_1,\ldots, e_m)$ for the vertices and edges of $G$, respectively. 
Let $\p:V\mapsto\reals^2$ be an injection, referred to as a (planar) \emph{embedding} of $G$.
We often identify an embedding $\p$ with a point in $\R^{2n}$, in the obvious way.
With this identifications, we define $f_{G}:\R^{2n}\to\reals^{m}$ by 
$$
f_G(\p)=(\ldots,\|\p(v_i)-\p(v_j)\|^2,\ldots)\in\reals^m,
$$
for each point $\p\in \R^{2n}$, where the entries correspond to the edges $(v_i,v_j)\in E$
in their prescribed order, and where $\|\cdot\|$ denotes the Euclidean norm in $\reals^2$. 
Note that $f_G$ is well defined even for points $\p\in \R^{2n}$ that correspond to embeddings which are not injective.
We refer to $f_G$ as the {\it edge function} of the graph $G$.

For a graph $G$, with $n$ vertices and $m$ edges, let $r(G):= \max\{{\rm rank}J_{f_G}(\p)\mid \p\in \R^{2n}\}$,
where $J_{f_G}$ stands for the $m\times (2n-3)$ Jacobian matrix of $f_G$.
Then $G$ is called {\it rigid} if $r(G)=2n-3$, and {\it flexible}, otherwise (see  Asimow and Roth~\cite{AR1} for equivalent definitions and for more details).
Equivalently, $G$ is rigid if and only if the image of $f_G$ over
$\R^{2n}$ forms a $(2n-3)$-dimensional algebraic variety in $\R^m$.
Note that a graph with $n$ vertices and $m<2n-3$ edges is never rigid.
A graph $G$ is called {\it minimally rigid} if it is rigid and has exactly $m=2n-3$ edges. 

We have the following result from rigidity theory.
\begin{lemma}\label{rigidlaman}
If $G$ is rigid, then there exists a subgraph $G'\subset G$ which is Laman.
\end{lemma}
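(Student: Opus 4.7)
The plan is to extract a Laman subgraph $G'\subseteq G$ by selecting $2n-3$ edges whose corresponding rows in the Jacobian $J_{f_G}$ are linearly independent, and then to verify both Laman conditions using the classical bound $\mathrm{rank}\,J_{f_H}\le 2|V_H|-3$, valid for every graph $H$, which in turn rests on the fact that infinitesimal planar Euclidean motions always lie in the kernel. Concretely, since $r(G)=2n-3$, I would fix a point $\p_0\in\R^{2n}$ at which $J_{f_G}(\p_0)$ attains rank $2n-3$, and choose a subset $E'\subseteq E$ of exactly $2n-3$ edges for which the corresponding rows of $J_{f_G}(\p_0)$ are linearly independent. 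Setting $G':=([n],E')$ immediately gives condition (i) in the definition of a Laman graph, namely $|E'|=2|V(G')|-3$.

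The core of the argument is the hereditary condition (ii). Let $H=(V_H,E_H)$ be any subgraph of $G'$ with $|V_H|\ge 2$. The row of $J_{f_G}(\p_0)$ associated with an edge $\{i,j\}\in E_H$ is supported only on the coordinate columns of $v_i$ and $v_j$, both of which lie in $V_H$. Dropping the identically zero columns outside $V_H$ therefore preserves linear independence and identifies these $|E_H|$ rows with the rows of $J_{f_H}(\p_0|_{V_H})$; hence $\mathrm{rank}\,J_{f_H}(\p_0|_{V_H})=|E_H|$. Combined with the universal upper bound $\mathrm{rank}\,J_{f_H}(\q)\le 2|V_H|-3$ valid for every embedding $\q$, this yields $|E_H|\le 2|V_H|-3$, which is exactly condition (ii). (As a byproduct, no vertex of $G'$ can be isolated, since that would immediately violate this count.)

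The one substantive ingredient, and the main if quite classical obstacle, is the universal kernel bound above. It follows from the observation that the three vector fields on $\R^2$ generating the planar Euclidean group — the two constant translations $(1,0)$ and $(0,1)$ together with the rotational field $(-y,x)$ — each preserve the squared distances $\|\p(v_i)-\p(v_j)\|^2$ to first order and hence lift to elements of $\ker J_{f_H}(\q)$ that are linearly independent as vectors in $\R^{2|V_H|}$ (except in the totally degenerate case where all points of $\q$ coincide, in which case $J_{f_H}(\q)=0$ and the bound holds trivially). A direct differentiation verifies this in one line, or one may invoke it as a standard fact from Asimow and Roth~\cite{AR1}. With this bound in hand, the construction above produces the desired Laman subgraph $G'\subseteq G$, completing the argument.
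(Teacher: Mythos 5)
Your argument is correct. Note, however, that the paper does not prove Lemma~\ref{rigidlaman} at all: it is simply quoted as ``a result from rigidity theory'' (it is the easy, Maxwell-count direction of Laman's theorem, cf.~\cite{AR1,Lam70}), so there is no in-paper proof to compare against. What you have written is the standard proof of that imported fact, and it checks out: choosing a point $\p_0$ where $J_{f_G}$ attains rank $2n-3$ and extracting $2n-3$ independent rows gives condition (i); for condition (ii), the rows indexed by the edges of a subgraph $H$ remain independent, are supported only on the columns of $V_H$, and hence realize $\mathrm{rank}\,J_{f_H}(\p_0|_{V_H})=|E_H|$, which the three independent infinitesimal Euclidean motions (two translations and one rotation, independent unless all points of $V_H$ coincide, in which case the Jacobian vanishes) cap at $2|V_H|-3$. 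Your parenthetical observation that the count forces $G'$ to have no isolated vertices is also right, and the degenerate coincident-points case is handled correctly. The only cosmetic caveat is that the paper's description of $J_{f_G}$ as an $m\times(2n-3)$ matrix is a typo for $m\times 2n$, which you have implicitly (and correctly) ignored.
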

\noindent{\it Remark.} The other direction of Lemma~\ref{rigidlaman} is true too (\cite{Cra}), but we prove it independently (see Theorem~\ref{mainnec}).

We say that a point $\p\in \R^{2n}$ is a {\it regular} embedding of $G$
if ${\rm rank }J_{f_G}(\p)=r(G)$, and {\it singular}, otherwise.
We say that a point $\p\in \R^{2n}$ is a {\it generic} embedding of $G$ 
if $\p$ is regular and ${\bf y}:=f_G(\p)$ is a regular point of the variety $I:=f_G(\R^{2n})$.
A graph $G$ is called {\it globally rigid} if, for every pair $\p,\p'\in \R^{2n}$ 
of generic embeddings of $G$ such that $f_G(\p)=f_G(\p')$, the sets
$\p(V)$ and $\p'(V)$ are congruent.\\

\noindent{\it Remark.} We note that the standard definition (see, e.g., Connelly~\cite{Con05}) of global rigidity refers only to embeddings $\p$
of a graph $G$ which are ``generic'' in the sense that their coordinates are algebraically independent over $\mathbb Q$. 
In our definition we consider a larger set of embeddings of $G$ and consider them as generic.
As we will see, both definitions yield the same family of globally rigid graphs. So our definition 
is better, in the sense that it applies to a larger set of embeddings of $G$.

We have the following result from rigidity theory.\footnote{Note 
that Lemma~\ref{uniquehend} applies also to our notion of global rigidity, since (a priori) our notion is more restrictive.} 
\begin{lemma}[Hendrickson~\cite{Hen88}]\label{uniquehend}
If $G$ is globally rigid, then $G$ is Hendrickson.
\end{lemma}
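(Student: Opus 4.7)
The plan is to establish the two defining properties of a Hendrickson graph---$3$-vertex-connectivity and redundancy---separately, each by contraposition against the global rigidity of $G$.

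\textbf{$3$-connectivity.} Suppose $G$ has a vertex separator $S\subset V$ with $|S|\le 2$, so that $V\setminus S=A\sqcup B$ with $A,B$ non-empty and no edges between $A$ and $B$. Fix a generic embedding $\p$ of $G$, and choose a non-identity planar isometry $\iota$ that fixes $\p(S)$ pointwise: for $|S|=2$ take the reflection across the line through $\p(S)$; for $|S|=1$ a non-trivial rotation about $\p(S)$; for $|S|=0$ any non-identity isometry. Define $\p'(v):=\iota(\p(v))$ for $v\in A\cup S$ and $\p'(v):=\p(v)$ for $v\in B\cup S$. Since every edge of $G$ is contained in either $A\cup S$ or $B\cup S$, and $\iota$ preserves squared distances, $f_G(\p')=f_G(\p)$. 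A short genericity check rules out $\p'(V)\cong\p(V)$: any congruence $\tau$ with $\tau\circ\p=\p'$ would have to fix $\p(B\cup S)$ pointwise, forcing it to be the identity for generic $\p$; but then $\iota$ would have to fix $\p(A)$ pointwise, which it does not. Since $\p'$ remains a regular embedding (a dense condition), this pair of non-congruent embeddings with identical edge lengths contradicts global rigidity.

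\textbf{Redundancy.} Suppose there is an edge $e=\{u,v\}\in E$ such that $G\setminus\{e\}$ contains no Laman subgraph. By Lemma~\ref{rigidlaman} (contrapositive), $G\setminus\{e\}$ is then flexible, i.e.\ $r(G\setminus\{e\})<2n-3$. Fix a generic embedding $\p$ of $G$ that is also regular for $G\setminus\{e\}$ (the intersection of two dense conditions). Near $\p$ the level set $f_{G\setminus\{e\}}^{-1}(f_{G\setminus\{e\}}(\p))$ modulo planar congruence is a smooth manifold of positive dimension $d$. Let $h(\q):=\|\q(u)-\q(v)\|^2$ and examine its restriction to this flex. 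If $h$ is locally constant on the flex, then every point of the flex is an embedding of $G$ with the same $f_G$ value as $\p$, producing a positive-dimensional family of pairwise non-congruent embeddings of $G$ with identical edge-length vectors---already contradicting global rigidity. If $h$ is non-constant and $d\ge 2$, the level set $\{h=h(\p)\}$ inside the flex is still at least $1$-dimensional and the same contradiction ensues. The only remaining sub-case, $d=1$ with $h$ non-constant, is the classical hard case: one uses the algebraic structure of the $1$-dimensional configuration curve of $G\setminus\{e\}$ to show that $h$ must attain the value $h(\p)$ at a second point of this curve (up to congruence), again producing a non-congruent embedding of $G$ with the same edge lengths.

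\textbf{Main obstacle.} The delicate step is the last sub-case ($d=1$, non-constant $h$), where a plain dimension count does not immediately yield a second embedding. One must argue, via the degree of the algebraic map $h$ restricted to the $1$-dimensional configuration variety of $G\setminus\{e\}$ (or equivalently an analytic-continuation/folding argument along the flex), that $h^{-1}(h(\p))$ contains at least one point in this variety besides $[\p]$. This is the classical core of Hendrickson's proof; our relaxed notion of generic embedding (Definition~\ref{def:generic}) supplies enough regularity and transversality for the argument to apply verbatim in our setting, as noted in the footnote to the statement.
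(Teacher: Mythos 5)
The paper gives no proof of this lemma at all---it is quoted as Hendrickson's theorem from \cite{Hen88}, with a footnote observing that it transfers to the paper's (more demanding) notion of global rigidity because any graph globally rigid in the paper's sense is globally rigid in the standard sense. Your proposal instead tries to reconstruct Hendrickson's original argument. The $3$-connectivity half is essentially complete: the reflection/rotation construction across a separator of size at most $2$ is the standard argument, and your genericity remarks, while terse, can be made precise (the map $\p\mapsto\p'$ is algebraic once the separator is fixed, so the locus of $\p$ for which $\p'$ fails to be generic is a proper subvariety).

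The redundancy half, however, has a genuine gap exactly where the theorem is hard. In the sub-case $d=1$ with $h$ non-constant you assert that ``$h$ must attain the value $h(\p)$ at a second point of this curve,'' but this is the entire content of Hendrickson's proof, not a routine degree computation. What is actually needed is: (i) that the configuration curve of $G\setminus\{e\}$ (after pinning out the congruences) is \emph{compact}---this uses connectivity of $G\setminus\{e\}$ to keep all vertices at bounded distance from the pinned ones, and smoothness at generic points, so that the curve is a finite disjoint union of circles; (ii) that $h(\p)$ is not an extremum of $h$ on the circle containing $[\p]$ (otherwise the ``second preimage'' may not exist or may coincide with $[\p]$), which requires showing $dh\neq 0$ along the flex at a generic configuration; and (iii) that the resulting second configuration is itself a \emph{generic} embedding of $G$ in the sense of the paper's definition---this matters because the paper's global rigidity only constrains pairs of generic embeddings, so a non-generic second preimage yields no contradiction. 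None of (i)--(iii) is supplied; your closing paragraph names the obstacle but does not overcome it. Given that the paper simply cites \cite{Hen88}, the honest options are to do the same, or to carry out steps (i)--(iii) in full; as written, the proposal is an outline of Hendrickson's proof rather than a proof.
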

The other direction, namely, that every Hendrickson graph $G$ is globally
rigid, follows by combining the two results of Connelly~\cite{Con05} and of
Jackson and Jordan \cite{JJ05}.
The analysis in this paper reproves this fact (extending it slightly, by showing it applies to 
a larger set of ``generic" embeddings), using only \cite{JJ05},
and bypassing, or finding alternative proof, for the result from \cite{Con05}.

\subsection{Pairs of embeddings that induce the same edge distances}
We consider the following (real) variety 
$$
V_G:=\{ (\p,\p')\in(\R^{2n})^2\mid f_G(\p)=f_G(\p')\}. 
$$
Note that $\dim_\R V_G\ge 2n+3$, for every graph $G$ on $n$ vertices, simply since it contains the subvariety
$$\{ (\p,\p')\in (\R^{2n})^2\mid \text{$\p'$ is congruent to $\p$}\},$$
and the latter has (real) dimension $2n+3$, as can be easily verified.

We have the following property.
\begin{lemma}\label{rigidpairs}
Let $G$ be a graph on $n$ vertices. If $\dim_\R V_G=2n+3$,
then $G$ is rigid.
\end{lemma}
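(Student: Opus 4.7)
The plan is to establish the general inequality $\dim_\R V_G \ge 4n - r(G)$; combined with the standard bound $r(G)\le 2n-3$ (which is built into rigidity theory through the action of the $3$-dimensional group of rigid motions), the assumption $\dim_\R V_G=2n+3$ then forces $r(G)=2n-3$, which is by definition $G$ being rigid.

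To prove $\dim_\R V_G\ge 4n-r(G)$, I would use the projection $\pi_1\colon V_G\to \R^{2n}$ onto the first $2n$ coordinates, which is surjective since $(\p,\p)\in V_G$ for every $\p$. Let $U\subset\R^{2n}$ be the nonempty Euclidean-open set on which $\rk J_{f_G}=r(G)$; it has real dimension $2n$. For any $\p\in U$, the rank theorem (applied to a maximal-rank minor of $J_{f_G}$) says that $f_G^{-1}(f_G(\p))$ is locally a smooth submanifold of $\R^{2n}$ of dimension $2n-r(G)$, and hence so is the fiber $\pi_1^{-1}(\p)=\{\p\}\times f_G^{-1}(f_G(\p))$. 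A standard fiber-dimension argument (applied to the semi-algebraic set $V_G\cap (U\times \R^{2n})$ with its surjection to $U$ whose fibers have constant dimension $2n-r(G)$) yields
$$
\dim_\R V_G \ge \dim_\R \bigl(V_G\cap(U\times\R^{2n})\bigr)\ge 2n+(2n-r(G))=4n-r(G).
$$

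For the upper bound $r(G)\le 2n-3$, I would note that $f_G$ is invariant under the $3$-dimensional Lie group of rigid motions of the plane acting diagonally on $\R^{2n}$. Differentiating, the two infinitesimal translations and the infinitesimal rotation produce, at any $\p\in\R^{2n}$ whose image in $\R^2$ is not a single point, three linearly independent vectors lying in $\ker J_{f_G}(\p)$; consequently $\rk J_{f_G}(\p)\le 2n-3$ at such $\p$. Choosing $\p$ that is both non-degenerate in this sense and a rank-maximizer of $J_{f_G}$ — both conditions define open dense subsets of $\R^{2n}$ — gives $r(G)\le 2n-3$.

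Combining the two bounds yields $2n+3=\dim_\R V_G\ge 4n-r(G)$, so $r(G)\ge 2n-3$, and with the opposite inequality we conclude $r(G)=2n-3$, as required. The main (mild) obstacle is being careful with real dimensions in the fiber-dimension step for $\pi_1$ on the semi-algebraic set $V_G$; this is standard once one observes that on $U$ the fibers are smooth manifolds of constant dimension $2n-r(G)$, so their dimensions add to that of the base.
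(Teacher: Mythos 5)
Your proof is correct, but it takes a genuinely different route from the paper's. The paper argues by contraposition: assuming $G$ is flexible, it invokes Asimow--Roth's Proposition~1 to produce a continuous flex $\q(t)$ through a regular embedding $\p_0$, and then observes that the union $\bigcup_t T(\q(t))$ of the congruence orbits along the flex is a four-dimensional subset of the fiber $\pi^{-1}(\p_0)\cap V_G$, giving $\dim_\R V_G\ge 2n+4$. You instead bound the fiber dimension from below directly by the constant rank theorem on the open set $U$ where $\rk J_{f_G}$ is maximal, obtaining the general inequality $\dim_\R V_G\ge 4n-r(G)$, and then close the argument with the standard bound $r(G)\le 2n-3$. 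The two arguments fiber over the same projection; they differ only in how the fiber dimension is certified. Your version buys a cleaner quantitative statement (the inequality $\dim_\R V_G\ge 4n-r(G)$ simultaneously recovers the paper's observation that $\dim_\R V_G\ge 2n+3$ always, and gives $\ge 2n+4$ in the flexible case since then $r(G)\le 2n-4$) and avoids importing the nontrivial equivalence between infinitesimal flexibility and the existence of a continuous flex; the price is that you must be careful with the semialgebraic fiber-dimension lemma and with the hypothesis $n\ge 2$ needed for $r(G)\le 2n-3$, both of which you handle adequately. One small point worth making explicit: the constant rank theorem describes $f_G^{-1}(f_G(\p))$ only \emph{near} $\p$, so what you actually get is that each fiber \emph{contains} a $(2n-r(G))$-dimensional manifold piece through the diagonal point $(\p,\p)$ --- which is all the lower bound requires.
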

\noindent{\it Remark.} Later (Theorem~\ref{mainnec}) we show that in fact $\dim_\R V_G=2n+3$ if and only if $G$ is rigid.
\begin{proof}
Assume that $G$ is flexible. 
Let $\pi:\R^{4n}\to\R^{2n}$ denote the projection of $\R^{4n}$ onto its first $2n$ coordinates.
Clearly, $\pi(V_G)=\R^{2n}$. We show that, for every $\p\in\R^{2n}$, the preimage $\pi^{-1}(\p)\cap V_G$
is at least four-dimensional. 
As above, for an embedding $\p\in\R^{2n}$ of $G$, define the variety
$$T(\p):=\{\p'\in\R^{2n}\mid \p'~\text{is congruent to $\p$}\}.$$ 
As was already noted, for $\p$ fixed, $T(\p)$ is $3$-dimensional.

Let $\p_0$ be a regular embedding of $G$. By our assumption that $G$ is flexible, 
there exists a continuous path $\q(t)$, $t\in[0,1)$, such that $\q(0)=\p_0$, $\q(t)\in f_G^{-1}(f_G(\p_0))$, and $\q(t)$ is 
not congruent to $\p_0$, for every $t\in(0,1)$ (see Asimow and Roth~\cite[Proposition 1]{AR1}).
Then the set 
$$
\{\q\mid t\in(0,1), \q~\text{is congruent to}~\q(t))\}=\bigcup_{t\in(0,1)}T(\q(t))
$$
is contained in $\pi^{-1}(\p)\cap V_G$ and forms a four-dimensional real manifold, as is not hard to verify
(note that the sets in the union are pairwise disjoint).
Thus, $\dim_\R\pi^{-1}(\p)\cap V_G\ge 4$ for every regular embedding $\p$ of $G$, which implies
that $\dim_\R V_G\ge 2n+4$.
This completes the proof of the lemma.
\end{proof}

\begin{lemma}\label{globalpairs}
Let $G$ be a graph on $n$ vertices. 
Suppose that every irreducible component of $V_G$ which has maximal dimension is contained in 
$$\{ (\p,\p')\in (\R^{2n})^2\mid \text{$\p'$ is congruent to $\p$}\}.$$
Then $G$ is globally rigid.
\end{lemma}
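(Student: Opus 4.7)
I would fix a pair of generic embeddings $\p,\p'$ of $G$ with $f_G(\p)=f_G(\p')$, and aim to show that $\p'$ is congruent to $\p$. First, I would observe that the hypothesis forces $\dim_\R V_G=2n+3$: the congruence subvariety has real dimension $2n+3$ and is contained in $V_G$, so the maximum is at least $2n+3$, and any irreducible component of strictly larger dimension would, by the hypothesis, have to sit inside the $(2n+3)$-dimensional congruence subvariety, which is impossible. Lemma~\ref{rigidpairs} then yields that $G$ is rigid, so $r(G)=2n-3$.

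The main step is to show that $(\p,\p')$ lies in a $(2n+3)$-dimensional irreducible component of $V_G$. For this I would describe $V_G$ locally at $(\p,\p')$ as a fiber product. Since $\p$ is a regular embedding, the rank of $J_{f_G}$ attains its maximum value $r(G)=2n-3$ at $\p$, and by lower semicontinuity it remains equal to $2n-3$ in an open neighborhood $U$ of $\p$; since ${\bf y}:=f_G(\p)$ is moreover a regular point of $I$, the constant rank theorem, together with the local smoothness of $I$ at ${\bf y}$, exhibits $f_G|_U$ as a submersion onto a smooth $(2n-3)$-dimensional patch of $I$ around ${\bf y}$, with each fiber a smooth $3$-manifold. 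The same argument applies near $\p'$, which is also generic and maps to the same point ${\bf y}$, giving a neighborhood $U'$ of $\p'$ with the analogous structure. Parametrizing nearby points $(\hat\p,\hat\p')\in V_G\cap(U\times U')$ by first choosing the common image ${\bf y}'\in I$ close to ${\bf y}$, then $\hat\p$ in the $3$-dimensional fiber of $f_G|_U$ over ${\bf y}'$, and then $\hat\p'$ in the $3$-dimensional fiber of $f_G|_{U'}$ over ${\bf y}'$, exhibits $V_G$ locally near $(\p,\p')$ as a smooth real manifold of dimension $(2n-3)+3+3=2n+3$.

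Hence $(\p,\p')$ is a smooth point of $V_G$ of local real dimension $2n+3$; since no irreducible component of $V_G$ has dimension greater than $2n+3$, this forces $(\p,\p')$ to lie in some $(2n+3)$-dimensional irreducible component of $V_G$. By the hypothesis, this component is contained in the congruence subvariety, and therefore $\p'$ is congruent to $\p$, establishing the global rigidity of $G$. The main obstacle I foresee is the local submersion/fiber-product picture: one has to use the genericity assumption on both $\p$ and $\p'$ to ensure that the images of the Jacobians $J_{f_G}(\p)$ and $J_{f_G}(\p')$ both coincide with $T_{\bf y}I$, which is precisely what keeps the local real dimension of $V_G$ at $(\p,\p')$ pinned to $2n+3$ rather than jumping higher.
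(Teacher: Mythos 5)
Your proposal is correct and follows essentially the same route as the paper's proof: first pin $\dim_\R V_G$ at $2n+3$ and invoke Lemma~\ref{rigidpairs} for rigidity, then use genericity of both embeddings to exhibit $V_G$ near $(\p,\p')$ as a fiber product over a $(2n-3)$-dimensional patch of $I=f_G(\R^{2n})$ with two $3$-dimensional fibers, forcing $(\p,\p')$ onto a maximal-dimensional component and hence into the congruence subvariety. Your invocation of the constant rank theorem just makes explicit the local structure the paper asserts more informally.
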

\begin{proof}
By assumption $\dim V_G= 2n+3$, and so, by Lemma~\ref{rigidpairs},
$G$ is rigid.

Let $\p_0,\p_0'$ be generic embeddings of $G$ such that $f_G(\p_0)=f_G(\p_0')$.
We claim that the pair $(\p_0,\p_0')$ lies on an irreducible component of $V_G$ of 
maximal dimension (that is, of dimension $2n+3$).
Put ${\bf y}_0:=f_G(\p_0)=f_G(\p_0')$ and $I:=f_G(\R^{2n})$. 
Let $N,N'\subset\R^{2n}$ be some open neighborhood of $\p_0,\p_0'$,  respectively.
Since each of $\p_0$ (resp., $\p_0'$) is generic, taking $N$ (resp., $N'$) to be sufficiently small, 
we may assume that the image of ${f_{G}}_{|_N}$ (resp., ${f_{G}}_{|_{N'}}$),  
the restriction of $f_G$ to $N$ (resp., to $N'$), is $(2n-3)$-dimensional.
Moreover, we may assume that $f_G(N)=f_G(N')$.
Indeed, each of $f_G(N),f_G(N')$ is a (relatively) open neighborhood of ${\bf y}_0$ in $I$, so their intersection (since it is nonempty)
must be open. 

Put $M:=f_G(N)=f_G(N')$. By what have just been argued, $M$ is a $(2n-3)$-dimensional  neighborhood of ${\bf y}_0$ in $I$,
and, for every ${\bf y}\in M$, we have $(\p,\p')\in V_G\cap (N\times N')$, where 
$\p\in{f_{G}}_{|_N}^{-1}({\bf y})$, $\p'\in {f_{G}}_{|_{N'}}^{-1}({\bf y})$, and each of 
${f_{G}}_{|_N}^{-1}({\bf y})\cap N$ and 
${f_{G}}_{|_{N'}}^{-1}({\bf y})\cap N'$ is 3-dimensional.
In other words, $V_G\cap (N\times N')$ is 
a neighborhood of $(\p_0,\p_0')$ in $V_G$ which is 
of dimension $2n-3+6=2n+3$.
So $(\p_0,\p_0')$ necessarily lies on an irreducible component of $V_G$ of maximal dimension.
This establishes the claim. 

Our assumption then implies that $\p_0$ is congruent to $\p_0'$.
Since this is true for every pair $(\p_0,\p_0')$ of generic embeddings of $G$, the lemma follows.
\end{proof}


\section{Reduction to line incidences in three dimensions}\label{reduction}

We apply the Elekes--Sharir framework (see~\cite{ES,GK2}) to connect the notion of graph rigidity of planar structures,
discussed in Section~\ref{sec:rigid}, with line configurations in $\R^3$ (or, rather, in
the real projective 3-space\footnote{To
simplify the presentation, we continue to work in the affine $\R^3$, 
but the extension of the analysis to the projective setup is straightforward.
Issues related to this extension will be noted throughout the analysis.}).
Specifically, we represent each orientation-preserving rigid motion of the plane 
(called a \emph{rotation} in \cite{ES,GK2}) as a point
$(c,\cot (\theta/2))$ in $\R^3$, where $c$ is the center of rotation, 
and $\theta$ is the (counterclockwise) angle of rotation. (Note that pure translations are mapped
in this manner to points at infinity.) Given a pair of distinct points $a,b\in\R^2$,
the locus of all rotations that map $a$ to $b$ is a line $\ell_{a,b}$ in the above
parametric 3-space, given by the parametric equation 
\begin{equation} \label{line}
\ell_{a,b} = \{ \left( u_{a,b} + tv_{a,b},\;t\right) \mid t\in\R \} ,
\end{equation}
where $u_{a,b} = \tfrac12(a+b)$ is the midpoint of $ab$, and $v_{a,b}=\tfrac12(a-b)^{\perp}$ is a vector
orthogonal to $\vec{ab}$ of length $\tfrac12\|a-b\|$, with $\vec{ab}$, $v_{a,b}$ positively oriented
(i.e., $v_{a,b}$ is obtained by turning $\vec{ab}$ counterclockwise by $\pi/2$). 

It is instructive to note (and easy to verify) that every non-horizontal line $\ell$ in $\R^3$ can be written 
as $\ell_{a,b}$, for a unique (ordered) pair $a,b\in\R^2$. 
More precisely, if $\ell$ is also non-vertical, the resulting $a$ and $b$ 
are distinct. If $\ell$ is vertical, then $a$ and $b$ coincide, at the intersection of $\ell$ with the $xy$-plane, 
and $\ell$ represents all rotations of the plane about this point.

A simple yet crucial property of this transformation is that, for any pair of pairs $(a,b)$
and $(c,d)$ of points in the plane, $\|a-c\|=\|b-d\|$ if and only if $\ell_{a,b}$ and $\ell_{c,d}$
intersect, at (the point representing) the unique rotation $\tau$ that maps $a$ to $b$
and $c$ to $d$. This also includes the special case where $\ell_{a,b}$ and $\ell_{c,d}$ are
parallel, corresponding to the situation where the transformation that maps $a$ to $b$
and $c$ to $d$ is a pure translation (this is the case when $\vec{ac}$ and $\vec{bd}$ are parallel (and of equal length)).

Note that no pair of lines $\ell_{a,b}$, $\ell_{a,c}$ with $b\neq c$ can intersect (or be parallel), 
because such an intersection would represent a rotation that maps $a$ both to $b$ and to $c$,
which is impossible.

\begin{lemma} \label{cong}
Let $L = \{\ell_{a_i,b_i} \mid a_i, b_i \in \R^2,\; i=1,\ldots,r\}$ be a collection of $r\ge 3$ (non-horizontal) lines in $\R^3$.

\noindent
(a) If all the lines of $L$ are concurrent, at some common point $\tau$,
then the sequences $A=(a_1,\ldots,a_r)$ and $B=(b_1,\ldots,b_r)$ are congruent, 
with equal orientations, and $\tau$ (corresponds to a rotation that) maps $a_i$ to $b_i$, for each $i=1,\ldots,r$.

\noindent
(b) If all the lines of $L$ are coplanar, within some common plane $h$,
then the sequences $A=(a_1,\ldots,a_r)$ and $B=(b_1,\ldots,b_r)$ are congruent, 
with opposite orientations, and $h$ defines, in a unique manner, an orientation-reversing 
rigid motion $h^*$ that maps $a_i$ to $b_i$, for each $i=1,\ldots,r$.

\noindent
(c) If all the lines of $L$ are both concurrent and coplanar, then the points
of $A$ are collinear, the points of $B$ are collinear, and $A$ and $B$ are congruent.
\end{lemma}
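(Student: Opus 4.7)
The statement decomposes naturally into three cases, and my plan is to handle (a) directly from definitions, treat (b) as the main technical case by extracting a glide reflection from the equation of $h$, and then combine the two motions to obtain (c). For part (a) I would simply unpack the definition of the lines $\ell_{a_i,b_i}$: the common intersection point $\tau$ of the $r$ lines of $L$ represents a single rotation of $\R^2$ (or a pure translation, if $\tau$ is at infinity) that simultaneously carries $a_i$ to $b_i$ for every $i$. Since any such map is an isometry preserving orientation, the sequences $A$ and $B$ are automatically congruent with equal orientations, and $\tau$ is the claimed rigid motion.

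For part (b), which I expect to be the main obstacle, I would extract the orientation-reversing rigid motion $h^{*}$ directly from the equation of the common plane. Writing $h:\;\alpha x+\beta y+\gamma z=\delta$ and setting $w:=(\alpha,\beta)$, one must have $w\neq 0$ since a horizontal plane contains no non-horizontal line. Substituting the parametric form \eqref{line} of each $\ell_{a_i,b_i}$ into the equation of $h$ and separating the constant and the linear-in-$t$ parts yields the two scalar conditions
\[
(a_i+b_i)\cdot w \,=\, 2\delta, \qquad (a_i-b_i)\cdot w^{\perp} \,=\, 2\gamma,
\]
valid for every $i=1,\ldots,r$. The first condition says that all midpoints $\tfrac12(a_i+b_i)$ lie on a fixed line $M\subset\R^2$ orthogonal to $w$; the second says that the component of $a_i-b_i$ along $M$ is a constant independent of $i$. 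These two facts are precisely the defining properties of a single glide reflection $h^{*}$ with axis $M$ and translation along $M$ determined by $\gamma/|w|$, satisfying $h^{*}(a_i)=b_i$ for every $i$. Because $h^{*}$ is an orientation-reversing isometry, the sequences $A$ and $B$ are congruent with opposite orientations, and the construction exhibits $h^{*}$ as uniquely determined by $h$.

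For part (c) I would combine the maps produced in (a) and (b): concurrency provides an orientation-preserving $\tau$ with $\tau(a_i)=b_i$ for all $i$, while coplanarity provides an orientation-reversing $h^{*}$ with $h^{*}(a_i)=b_i$ for all $i$. The composition $(h^{*})^{-1}\!\circ\tau$ is then an orientation-reversing isometry of $\R^2$ fixing every $a_i$; since its two factors have opposite orientations, it is not the identity. But an orientation-reversing isometry of the plane is either a reflection or a glide reflection, and a glide reflection has no fixed points, so $(h^{*})^{-1}\!\circ\tau$ must be a reflection and all $a_i$ lie on its fixed axis. The symmetric argument applied to $\tau\circ(h^{*})^{-1}$, which fixes every $b_i$, shows that the $b_i$ are also collinear, and the congruence of $A$ and $B$ has already been established in (a).
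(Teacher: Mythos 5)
Your proposal is correct, but for parts (b) and (c) it follows a genuinely different route from the paper. The paper proves (b) indirectly: coplanar lines pairwise intersect or are parallel, so $\|a_i-a_j\|=\|b_i-b_j\|$ for all $i,j$, hence $A$ and $B$ are congruent; assuming the lines are not also all concurrent, no orientation-preserving motion carries $A$ to $B$, so the unique motion doing so must be orientation-reversing. Because that argument excludes the concurrent-and-coplanar case, the paper's proof of (c) has to adjoin an auxiliary line $\lambda=\ell_{a',b'}\subset h$ chosen so that $L\cup\{\lambda\}$ is not concurrent, apply (b) to the enlarged family, and then assert that a set mapped to another by both an orientation-preserving and an orientation-reversing motion must be collinear. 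Your proof of (b) instead reads the glide reflection $h^*$ off the coefficients of $h$ (the constant and linear-in-$t$ parts of the substitution correctly give the axis $M$ and the glide component), which works uniformly whether or not the lines are also concurrent; this both makes the ``unique manner'' clause explicit (your $h^*$ depends only on $h$, not on $A$ and $B$) and lets your (c) dispense with the auxiliary line. Your composition argument in (c) --- that $(h^*)^{-1}\circ\tau$ is orientation-reversing with fixed points, hence a reflection whose axis contains all $a_i$ --- is a clean proof of exactly the step the paper leaves as an assertion. The only cosmetic caveats: when $\gamma=0$ your $h^*$ degenerates to a pure reflection (still orientation-reversing, so nothing breaks), and in the concurrent-and-coplanar case the phrase ``opposite orientations'' in (b) is only meaningful in the sense that an orientation-reversing isometry carries $A$ to $B$, which is precisely what you establish.
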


\noindent{\bf Proof.}
(a) This is an immediate consequence of the properties of the Elekes--Sharir transformation,
as noted above.

\noindent
(b) For each pair of indices $i\ne j$, the lines $\ell_{a_i,b_i}$ and $\ell_{a_j,b_j}$
intersect or are parallel. Hence $\|a_i-a_j\|=\|b_i-b_j\|$. This implies that the sequences
$A$ and $B$ are congruent. Assume that the lines $\ell_{a_i,b_i}$ are not all concurrent
(this is the case that will be addressed in part (c)). That is, there is no orientation-preserving 
rigid motion that maps $A$ to $B$, so $A$ and $B$ must have opposite orientations, and the unique rigid 
motion $h^*$ that maps $A$ to $B$ is orientation-reversing.

\noindent
(c) As in the previous cases, $\|a_i-a_j\|=\|b_i-b_j\|$ for every $i,j$, and hence the sequences $A$ and $B$ are necessarily congruent.
As in part (a), since the lines of $L$ are concurrent, there exists an orientation-preserving rigid motion $\tau$
that maps $A$ to $B$. In our case the lines of $L$ also lie on a common plane $h$, and thus any line $\lambda$ in $h$
intersects each of the lines of $L$. Choose $\lambda$ in $h$ so that the lines of $L\cup\{\lambda\}$ are not all concurrent. 
Note that we have $\lambda=\ell_{a',b'}$, for some $a',b'\in \R^2$, as every line in $\R^3$ can be interpreted in this way.
By part (b), applied  to the set $L\cup\{\lambda\}$, there exists an orientation-reversing rigid 
motion $h^*$ that (in particular) maps the 
$A$ to $B$. So $B$ is the image of $A$ (as sequences) under
an orientation-preserving rigid motion, and also under an orientation-reversing rigid motion, which can
happen only if both sets are collinear. 
$\hfill\Box$


\section{Necessity of our conditions}\label{sec:nec}

In this section we show that the conditions in Theorem~\ref{main} and Theorem~\ref{main2}
are not only sufficient, but also necessary. That is, we have the following statements.
\begin{theorem}\label{mainnec}
Let $G$ be a graph on $n$ vertices.
Then $X_G$ is $(2n+3)$-dimensional if and only if there exists a subgraph $G'\subset G$ which is Laman.
\end{theorem}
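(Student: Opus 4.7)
The ``if'' direction of Theorem~\ref{mainnec} is immediate: a Laman subgraph $G'\subset G$ gives $X_{K_n}\subset X_G\subset X_{G'}$, and Lemma~\ref{complete} together with Theorem~\ref{main} pins both outer dimensions at $2n+3$. For the ``only if'' direction I would prove the contrapositive by combining the Elekes--Sharir correspondence of Section~\ref{reduction} with the rigidity-theoretic machinery of Section~\ref{sec:rigid}.

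Assume $G$ has no Laman subgraph, so $G$ is flexible by the contrapositive of Lemma~\ref{rigidlaman}. Unpacking the proof of Lemma~\ref{rigidpairs}, one sees that the excess dimension $\dim_{\R}V_G\ge 2n+4$ is realized by an explicit local construction: starting from a regular (hence injective) embedding $\p_0$ together with the continuous co-distance path $\q(t)$, the union $\bigcup_{t\in(0,\eps)}T(\q(t))$ is a $4$-dimensional subset of the fiber $\pi^{-1}(\p_0)\cap V_G$, and by continuity of $\q(t)$, taking $\eps$ sufficiently small ensures that every embedding appearing in this union is injective. Letting $\p_0$ vary over a $2n$-dimensional open set of regular embeddings yields a $(2n+4)$-dimensional real subset $S\subset V_G$ consisting of pairs $(\p,\p')$ whose $n$ coordinate pairs $(a_i,b_i)$ are pairwise distinct.

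Next, the Elekes--Sharir map $(a,b)\mapsto \ell_{a,b}$ is visibly linear in $(a,b)$, since $u_{a,b}=\tfrac12(a+b)$ and $v_{a,b}=\tfrac12(a-b)^\perp$ are; it therefore extends coordinate-wise to a real-linear bijection $\Phi:\R^{4n}\to\R^{4n}$. A direct computation using~\eqref{intersect} shows that $\Phi$ carries the defining equations $\|a_i-a_j\|^2=\|b_i-b_j\|^2$ of $V_G$ to the intersection equations cutting out $Y_G\supset X_G$, and sends pairs with pairwise distinct $(a_i,b_i)$ to sequences of pairwise distinct lines, hence into $\hat X_G$. Applied to $S$ this gives $\Phi(S)\subset \hat X_G\cap \R^{4n}$, so $\dim_{\R}(X_G\cap \R^{4n})\ge \dim_{\R}\Phi(S)=\dim_{\R}S=2n+4$. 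The standard inequality $\dim_{\R}(X_G\cap \R^{4n})\le \dim_{\C}X_G$, valid because $X_G$ is defined over $\R$ (compare real tangent spaces at smooth real points of any irreducible component), then forces $\dim_{\C}X_G\ge 2n+4$, contradicting $\dim X_G=2n+3$.

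The delicate point, and the one I would take most care with, is to confine the flex construction to the ``injective'' part of $V_G$, so that $\Phi(S)\subset \hat X_G$ rather than merely $\Phi(S)\subset Y_G$. Since $X_G$ may in principle be a proper subvariety of $Y_G$, a non-injective flex could contribute dimension only to $Y_G\setminus X_G$ and would not give the desired bound on $\dim X_G$. Tracking injectivity throughout the flex is therefore what makes the reduction to $X_G$ work; the remaining steps are routine dimension counts together with the well-known real-vs-complex dimension comparison.
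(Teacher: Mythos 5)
Your proof follows the same route as the paper's: the ``if'' direction via Theorem~\ref{main} and Lemma~\ref{complete}, and the converse by combining flexibility (Lemma~\ref{rigidlaman}), the lower bound $\dim_\R V_G\ge 2n+4$ from (the proof of) Lemma~\ref{rigidpairs}, the Elekes--Sharir transfer of $V_G$ into $X_G\cap\R^{4n}$ (the paper's Lemma~\ref{WX}), and the real-versus-complex dimension comparison. Your additional care in restricting the flex to injective embeddings, so that the image lands in $\hat X_G$ rather than merely in $Y_G$, tightens a point that the paper's Lemma~\ref{WX} passes over silently, but it does not alter the structure of the argument.
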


\begin{theorem}\label{main2nec}
Let $G$ be a graph on $n$ vertices.
Then $G$ is Hendrickson if and only if  every irreducible component of maximal dimension $X\subset X_G$ is contained in $X_{K_n}$.
\end{theorem}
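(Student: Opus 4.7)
The plan is to handle both directions of the equivalence separately. The forward direction, that Hendrickson implies every maximal-dimensional irreducible component of $X_G$ is contained in $X_{K_n}$, is essentially immediate from Theorem~\ref{main2}: since $G$ is Hendrickson it is in particular redundant, so $G$ contains a Laman subgraph, and hence by Theorem~\ref{main} we have $\dim X_G=2n+3$. For any $(2n+3)$-dimensional irreducible component $X\subset X_G$, Theorem~\ref{main2} asserts that every $x\in X$ corresponds to a sequence $L(x)$ of $n$ lines which are either all concurrent or all coplanar; in particular $G_{L(x)}=K_n$, and (invoking Lemma~\ref{complete}) this places $X$ inside $X_{K_n}$.

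For the backward direction, suppose every maximal-dimensional irreducible component of $X_G$ is contained in $X_{K_n}$. Since $X_{K_n}\subset X_G$ has dimension $2n+3$ by Lemma~\ref{complete}, we have $\dim X_G\ge 2n+3$; and if $\dim X_G$ exceeded $2n+3$, then a maximal-dimensional component of $X_G$ would be too large to fit inside the $(2n+3)$-dimensional $X_{K_n}$, contradicting the hypothesis. Hence $\dim X_G=2n+3$, and Theorem~\ref{mainnec} (which we assume to be already established) gives that $G$ contains a Laman subgraph. The plan is now to transport the hypothesis to the graph-rigidity setting of Section~\ref{sec:rigid} via the Elekes--Sharir framework of Section~\ref{reduction}, and conclude using Lemma~\ref{globalpairs} together with Hendrickson's theorem (Lemma~\ref{uniquehend}).

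The Elekes--Sharir correspondence $\ell\leftrightarrow(a,b)$ with $\ell=\ell_{a,b}$ extends to a polynomial isomorphism $\C^{4n}\cong(\C^{2n})^2$, under which the intersection polynomial $g$ of \eqref{intersect} transforms into the complexified squared-distance equality. Consequently $X_G$ is carried to the complex variety $V_G^{\C}:=\{(\p,\p')\in(\C^{2n})^2\mid f_G^{\C}(\p)=f_G^{\C}(\p')\}$, where $f_G^{\C}$ uses the complex-bilinear extension of the squared Euclidean norm on $\R^2$, and the complexification of Lemma~\ref{cong} identifies $X_{K_n}$ with the subvariety $C_n^{\C}\subset V_G^{\C}$ of pairs $(\p,\p')$ lying in a common $(O(2,\C)\ltimes\C^2)$-orbit. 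The hypothesis thus becomes: every maximal-dimensional irreducible component of $V_G^{\C}$ lies in $C_n^{\C}$. Passing to real points, using $\dim_{\R}V_G=\dim_{\C}V_G^{\C}=2n+3$, each maximal-dimensional real irreducible component of $V_G$ is contained in a maximal-dimensional complex component of $V_G^{\C}$ and hence lies in $C_n^{\C}\cap(\R^{2n})^2$. A short check shows that this intersection equals the real congruence subvariety $\{(\p,\p')\mid\p'\text{ is congruent to }\p\}$: any real pair which is $\C$-congruent is also $\R$-congruent, treating the non-collinear case (where the congruence is uniquely determined and thus forced to be real) and the collinear case (where one can always exhibit a real rigid motion matching the distances) separately. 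Lemma~\ref{globalpairs} then yields that $G$ is globally rigid, and Hendrickson's theorem (Lemma~\ref{uniquehend}) concludes that $G$ is Hendrickson.

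I expect the main obstacle to be the descent from complex to real: one must track how complex and real irreducible components of $V_G^{\C}$ and $V_G$ interact, so that maximal-dimensional real components really do inherit their containment in $C_n^{\C}$ from maximal-dimensional complex ones, and verify the equality $C_n^{\C}\cap(\R^{2n})^2=\{(\p,\p')\mid\p'\text{ is congruent to }\p\}$ in all (including the collinear-degenerate) cases. A secondary but routine subtlety is that $X_G$ is the Zariski closure of the locus of \emph{distinct}-line configurations whereas $V_G^{\C}$ has no analogous restriction, but this discrepancy is confined to a lower-dimensional diagonal and cannot affect components of maximal dimension.
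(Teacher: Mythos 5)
Your proposal is correct and, at its core, follows the same strategy as the paper: the forward direction is Theorem~\ref{main2} applied verbatim to the $(2n+3)$-dimensional components, and the backward direction funnels the hypothesis into Lemma~\ref{globalpairs} and then Lemma~\ref{uniquehend}. The difference is in the transport step. The paper stays entirely real: it uses the map $\varphi$ of Lemma~\ref{WX} and the sandwich $X_{K_n}\cap\R^{4n}\subset\varphi(V_G)\subset X_G\cap\R^{4n}$, so that a maximal-dimensional real component of $V_G$ lands in $X_G\cap\R^{4n}$, its complex Zariski closure is a $(2n+3)$-dimensional subvariety of $X_G$ and hence lies in $X_{K_n}$ by hypothesis, and real points of $X_{K_n}$ pull back to congruent pairs via Lemma~\ref{cong}. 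You instead complexify the whole Elekes--Sharir correspondence and then descend to real points; the dimension bookkeeping is the same, but this route forces you to assert an identification of $X_{K_n}$ with a complex congruence variety $C_n^{\C}$ of $(O(2,\C)\ltimes\C^2)$-orbit pairs, and that identification is delicate and, as stated, not quite correct: the complexified rotation parametrization degenerates where $\cot(\theta/2)=\pm i$ (the rotation matrix entries $\tfrac{t^2-1}{t^2+1},\tfrac{2t}{t^2+1}$ blow up), so concurrency at points of $\C^3$ with third coordinate $\pm i$ does not correspond to a group element, and $X_{K_n}$ is not literally $C_n^{\C}$. Fortunately you never need this: since $X_{K_n}\subset Y_{K_n}$, every \emph{real} point of $X_{K_n}$ is a sequence of real lines that pairwise intersect or are parallel, hence corresponds to a pair $(\p,\p')$ with all pairwise squared distances equal, which is already real-congruent. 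Replacing the $C_n^{\C}$ detour by this observation makes your argument close, and it then coincides with the paper's proof.
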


The following observation says that $X_G\cap \R^{4n}$ contains an isomorphic copy of $V_G$.
\begin{lemma}\label{WX}
Let $G=(V,E)$ be a graph on $n$ vertices and $m$ edges. Then there exists a polynomial mapping $\varphi:(\R^{2n})^2\to \R^{4n}$
such that $\varphi(V_G)\subset X_G\cap \R^{4n}$. 
\end{lemma}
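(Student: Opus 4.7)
The plan is to take $\varphi$ to be the Elekes--Sharir transformation applied coordinate-wise. Writing $\p = (p_1,\ldots,p_n)$ and $\p' = (p_1',\ldots,p_n')$ with each $p_i, p_i' \in \R^2$, I set $\varphi(\p,\p') := (u_1,\ldots,u_n) \in \R^{4n}$, where $u_i \in \R^4$ is the tuple of coefficients representing the Elekes--Sharir line $\ell_{p_i, p_i'}$ in the parameterization of Section~2. A direct reading of~\eqref{line} shows that each $u_i$ depends linearly (hence polynomially) on the entries of $p_i$ and $p_i'$, so $\varphi$ is polynomial, as required.

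Next I would verify that $\varphi(V_G) \subset Y_G$. Fix $(\p, \p') \in V_G$, so $\|p_i - p_j\| = \|p_i' - p_j'\|$ for every $\{i,j\} \in E$. The defining property of the Elekes--Sharir transformation recalled in Section~\ref{reduction} translates this equality into the statement that the lines $\ell_{p_i, p_i'}$ and $\ell_{p_j, p_j'}$ meet or are parallel, equivalently into the vanishing of the polynomial $g$ of~\eqref{intersect} on the pair $(u_i, u_j)$. Since this holds for every edge of $G$, the point $\varphi(\p, \p')$ satisfies all defining equations of $Y_G$.

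To upgrade this to the sharper inclusion $\varphi(V_G) \subset X_G = \cl(\hat X_G)$, I pass to the complexifications $\varphi_\C : (\C^{2n})^2 \to \C^{4n}$ and $V_G^\C$, defined by the same polynomial formulas, and consider the Zariski-open subset $W \subset V_G^\C$ on which the $n$ lines $\ell_{p_i, p_i'}$ are pairwise distinct; from the explicit formula for $\varphi$, the complement $V_G^\C \setminus W$ consists precisely of pairs with $(p_i, p_i') = (p_j, p_j')$ for some $i \neq j$. On $W$ the previous step places the image into $\hat X_G$, so the elementary fact that the Zariski closure of the image of a Zariski-dense open subvariety agrees with that of the whole variety (which one sees by noting that a polynomial vanishing on $\varphi_\C(W)$ pulls back to a polynomial vanishing on the dense open $W \subset V_G^\C$, hence on all of $V_G^\C$) yields the chain
$$
\varphi(V_G) \;\subset\; \varphi_\C(V_G^\C) \;\subset\; \cl(\varphi_\C(W)) \;\subset\; \cl(\hat X_G) \;=\; X_G,
$$
which, combined with the trivial $\varphi(V_G) \subset \R^{4n}$, gives the conclusion of the lemma.

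The main obstacle is establishing the Zariski-density of $W$ in $V_G^\C$: one must rule out that some irreducible component of $V_G^\C$ is entirely contained in the coincidence locus $\bigcup_{i \neq j} V(p_i - p_j,\, p_i' - p_j')$. My plan is to use the fact that $V_G^\C$ always contains the $(2n+3)$-dimensional trivial family $\{(\p, R\p + v) : R \in O_2(\C),\, v \in \C^2\}$, on which a generic $\p$ has pairwise distinct coordinates and therefore yields pairwise distinct lines, together with the observation that any flex of $G$ produces an algebraic deformation within $V_G^\C$ emanating from a trivial configuration. This should preclude any irreducible component of $V_G^\C$ from being confined to the coincidence locus, completing the argument.
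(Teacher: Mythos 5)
Your construction of $\varphi$, the check that it is polynomial (linear, in fact, by~\eqref{line}), and the verification that the edge constraints $\|p_i-p_j\|=\|p_i'-p_j'\|$ translate into the vanishing of $g$ on the corresponding pairs of lines are exactly the paper's proof; the only additional point the paper records is that real planar points give real line coordinates, hence the image lies in $\R^{4n}$. Where you diverge is in the final step: you correctly observe that the intersection conditions a priori only place $\varphi(\p,\p')$ in $Y_G$, whereas the lemma asserts membership in $X_G=\cl(\hat X_G)$, which additionally requires handling pairs for which some lines $\ell_{p_i,p_i'}$ coincide. This is a genuine subtlety that the paper's proof passes over in silence (it concludes ``$\in X_G$'' directly from the intersection conditions). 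However, your repair is not complete, and you say so yourself: everything rests on the Zariski-density of the non-coincidence locus $W$ in $V_G^\C$, i.e., on excluding irreducible components of $V_G^\C$ contained in $\bigcup_{i\neq j}\{p_i=p_j,\,p_i'=p_j'\}$, and the closing sketch does not do this --- exhibiting one large component (the trivial congruence family) on which generic points have distinct coordinates says nothing about other components, and ``flexes emanating from a trivial configuration'' only track the component containing that configuration. So, measured against what the paper actually writes down, your proposal establishes the same things by the same route; measured against the literal statement $\varphi(V_G)\subset\cl(\hat X_G)$, both your write-up and the paper's leave the degenerate-configuration issue unresolved, and your proposed fix is still a plan rather than a proof.
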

\begin{proof}
Define $\varphi:(\R^{2n})^2\to \R^{4n}$ by
$$((p_1,\ldots,p_n),(p'_1,\ldots,p'_n))\mapsto (\ell_{p_1,p_1'},\ldots,\ell_{p_n,p_n'}).
$$
We claim that $\varphi(V_G)\subset X_G\cap \R^{4n}$.
Indeed, by the definition of $V_G$, $\|p_i-p_j\|=\|p'_i-p'_j\|$, for every $(i,j)\in E$.
By the properties reviewed in Section~\ref{reduction}, the lines $\ell_{p_i,p_i'}$ and $\ell_{p_j,p_j'}$ 
then must intersect, for every $(i,j)\in E$,
and thus $(\ell_{p_1,p_1'},\ldots,\ell_{p_n,p_n'})\in X_G$. Since the points $p_i,p_i'$, for $i=1,\ldots,n$, have 
real coordinates, the representation of each of the lines $\ell_{p_i,p_i'}$, for $i=1,\ldots,n$,
 as points in $\C^4$, requires only real coefficients.
Thus $(\ell_{p_1,p_1'},\ldots,\ell_{p_n,p_n'})\in X_G\cap \R^{4n}$, as claimed.
\end{proof}

We are now ready to prove Theorem~\ref{mainnec} and Theorem~\ref{main2nec}.
\begin{proof}[Proof of Theorem~\ref{mainnec}]
Let $G$ be a graph on $n$ vertices.
If $G$ contains a subgraph $G'$ which is Laman, then $X_G$ is $(2n+3)$-dimensional, by Theorem~\ref{main}
(and the remark following it).

We now prove the opposite direction. For contradiction, assume that $X_G$ is $(2n+3)$-dimensional, and that
$G$ does not contain a subgraph which is Laman. By Lemma~\ref{rigidlaman}, $G$ is flexible.
Using Lemma~\ref{rigidpairs} (and the fact that $\dim V_G\ge 2n+3$, for every graph $G$), we get that 
$\dim V_G\ge 2n+4$.
Lemma~\ref{WX} then implies that also $\dim_{\C}X_G\ge \dim_{\R} X_G\ge 2n+4$.
This contradicts our assumption about the dimension of $X_G$, 
and by this completes the proof.
\end{proof}

\begin{proof}[Proof of Theorem~\ref{main2nec}]
Let $G$ be a graph on $n$ vertices.
If $G$ is Hendrickson, then every irreducible component of maximal dimension 
$X\subset X_G$ is contained in $X_{K_n}$, by Theorem~\ref{main2}.

For the opposite direction, assume that
every irreducible component of maximal dimension $X\subset X_G$ is contained in $X_{K_n}$.
Since 
$$
X_{K_n}\cap \R^{4n} \subset \varphi(V_G)\subset X_G\cap\R^{4n},
$$ 
it follows that irreducible components of $V_G$ which have maximal dimension
are exactly $X_{K_n}\cap \R^{4n}$.
Thus $G$ is globally rigid, by Lemma~\ref{globalpairs}.
Using Lemma~\ref{uniquehend},  we get that $G$ is Hendrickson, as asserted.
\end{proof}

\section{Applications}
\subsection{Standard rigidity}
We reprove the following theorem of Connelly~\cite{Con05} (see also \cite{JJS}
for a simplification of the proof in \cite{Con05}).
\begin{theorem}
If $G$ is obtained from $K_4$ by a sequence
of edge additions and $1$-extensions, then $G$ is globally rigid in $\R^2$.
\end{theorem}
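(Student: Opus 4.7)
The plan is to combine Theorem~\ref{main2} with Lemma~\ref{globalpairs}, using the Elekes--Sharir map $\varphi$ of Lemma~\ref{WX} as the bridge. First I would verify that $G$ is Hendrickson: $K_4$ is $3$-connected, and removing any one of its edges leaves a graph containing a $K_3$ (which is Laman), so $K_4$ is redundant, hence Hendrickson. By the remark following Theorem~\ref{constJJ}, both edge additions and $1$-extensions preserve the Hendrickson property, so an easy induction along the given construction sequence yields that $G$ itself is Hendrickson.

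Applying Theorem~\ref{main2} to $G$, every $(2n+3)$-dimensional irreducible component of $X_G$ consists of configurations of $n$ lines that are all concurrent or all coplanar, hence (by Lemma~\ref{complete}) lies in $X_{K_n}$. To transfer this information to the real variety $V_G$, I would use $\varphi$. The inclusion $\varphi(V_G) \subset X_G \cap \R^{4n}$, together with $\dim_\C X_G = 2n+3$ (Theorem~\ref{main}) and the essential injectivity of $\varphi$ (each non-horizontal line in $\R^3$ arises from a unique ordered pair of planar points, as recalled in Section~\ref{reduction}), forces $\dim_\R V_G \le 2n+3$. Combined with the universal lower bound $\dim_\R V_G \ge 2n+3$, equality holds.

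Now let $W$ be any maximal-dimension irreducible component of $V_G$. Then the Zariski closure of $\varphi(W)$ in $\C^{4n}$ is a $(2n+3)$-dimensional complex subvariety of $X_G$, so it lies in a single $(2n+3)$-dimensional irreducible component of $X_G$, which by the previous paragraph is contained in $X_{K_n}$. Thus, for every pair $(\p, \p') \in W$, the image $\varphi(\p, \p') = (\ell_{p_1, p_1'}, \ldots, \ell_{p_n, p_n'})$ is a configuration of lines that are all concurrent or all coplanar; Lemma~\ref{cong} then forces the sequences $(p_i)$ and $(p_i')$ to be congruent, i.e., $\p'$ is congruent to $\p$. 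Lemma~\ref{globalpairs} concludes that $G$ is globally rigid in $\R^2$.

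The main delicate point is the dimension bookkeeping across the real/complex divide --- in particular, guaranteeing that every maximal-dimension component of $V_G$ maps, via $\varphi$, into a maximal-dimension component of $X_G$, and that the lower-dimensional locus where $\varphi$ is ill-behaved (horizontal lines) can safely be ignored on maximal components. Once this is verified, the argument is a clean concatenation of previously established pieces of the paper.
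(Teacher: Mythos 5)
Your proof is correct and follows exactly the route the paper takes (the paper's own proof is literally the one-line instruction to combine Theorem~\ref{main2}, Lemma~\ref{globalpairs} and Lemma~\ref{WX}); your write-up simply supplies the omitted bookkeeping --- the Hendrickson induction, the dimension transfer through $\varphi$, and the appeal to Lemma~\ref{cong} --- and does so correctly. One cosmetic remark: under the paper's footnote convention a subgraph must span the vertex set, so to verify that $K_4$ is redundant you should note that $K_4\setminus\{e\}$ is itself Laman rather than appeal to a $K_3$ contained in it.
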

\begin{proof}
This follows by combining Theorem~\ref{main2}, Lemma~\ref{globalpairs} and Lemma~\ref{WX}.
\end{proof}

\subsection{Rigidity and global rigidity on a two-dimensional sphere}
Consider the rigidity problem, where this time the vertices of a graph $G$ are embedded to 
the unit sphere in $\R^3$. Note that isometries of the sphere can be represented as points in $\R^3$ (see \cite{Tao}).
Thus, in view of our general results about lines, we obtain the following corollaries.
\begin{corollary}
Consider embeddings of graphs in the unit sphere $\S^2\subset\R^3$.
Then a graph $G$ is rigid if and only if there exists a subgraph $G'\subset G$ which is Laman.
\end{corollary}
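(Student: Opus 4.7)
The plan is to imitate the chain of arguments in Sections~\ref{sec:rigid}--\ref{sec:nec}, with the group of orientation-preserving isometries of $\mathbb{S}^2$ (i.e.\ $SO(3)$) replacing the group of orientation-preserving rigid motions of the plane. The only new ingredient needed is a spherical analog of the Elekes--Sharir correspondence of Section~\ref{reduction}: the reference~\cite{Tao} provides a parametrization of $SO(3)$ as (almost all of) $\mathbb{R}^3$, under which, for each ordered pair $a,b\in\mathbb{S}^2$, the one-parameter family of rotations sending $a$ to $b$ becomes a line $\ell^{\mathbb{S}^2}_{a,b}$ in this parameter space, and two such lines $\ell^{\mathbb{S}^2}_{a,b},\ell^{\mathbb{S}^2}_{c,d}$ meet (or are parallel) if and only if $d_{\mathbb{S}^2}(a,c)=d_{\mathbb{S}^2}(b,d)$.

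First I would set up the spherical definitions: let $f_G^{\mathbb{S}^2}:(\mathbb{S}^2)^n\to\mathbb{R}^m$ be the edge function built from squared spherical distances, call $G$ \emph{rigid on $\mathbb{S}^2$} if $\max\operatorname{rank}J_{f_G^{\mathbb{S}^2}}=2n-3$, and form
\begin{equation*}
V_G^{\mathbb{S}^2}:=\{(\mathbf{p},\mathbf{p}')\in(\mathbb{S}^2)^n\times(\mathbb{S}^2)^n\mid f_G^{\mathbb{S}^2}(\mathbf{p})=f_G^{\mathbb{S}^2}(\mathbf{p}')\},
\end{equation*}
which always contains the congruence orbit of real dimension $2n+3$. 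Lemma~\ref{rigidlaman} is a purely Jacobian-rank statement and carries over: if $G$ is rigid on $\mathbb{S}^2$ then $G$ contains a Laman subgraph, establishing one direction. For the converse, the Asimow--Roth style proof of Lemma~\ref{rigidpairs} is also intrinsic to the Jacobian formalism and applies verbatim on $\mathbb{S}^2$, yielding: if $G$ is flexible on $\mathbb{S}^2$ then $\dim_{\mathbb{R}} V_G^{\mathbb{S}^2}\ge 2n+4$.

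Now I would define the spherical Elekes--Sharir map $\varphi^{\mathbb{S}^2}:V_G^{\mathbb{S}^2}\to X_G\cap\mathbb{R}^{4n}$ by
\begin{equation*}
(\mathbf{p},\mathbf{p}')\;\longmapsto\;\bigl(\ell^{\mathbb{S}^2}_{p_1,p_1'},\ldots,\ell^{\mathbb{S}^2}_{p_n,p_n'}\bigr),
\end{equation*}
which lands in $X_G\cap\mathbb{R}^{4n}$ exactly as in Lemma~\ref{WX}, since the edge-matching condition on $V_G^{\mathbb{S}^2}$ translates via the intersection property into the incidence condition defining $X_G$. Suppose $G$ contains no Laman subgraph; then $G$ is flexible on $\mathbb{S}^2$ (by Lemma~\ref{rigidlaman}, applied on the sphere), so $\dim_{\mathbb{R}}V_G^{\mathbb{S}^2}\ge 2n+4$, and pushing forward by $\varphi^{\mathbb{S}^2}$ and taking Zariski closure yields $\dim_{\mathbb{C}}X_G\ge 2n+4$, contradicting Theorem~\ref{main}. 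This establishes the remaining direction, completing the proof.

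The main obstacle is verifying that $\varphi^{\mathbb{S}^2}$ is generically finite-to-one, so that real dimensions transfer faithfully into $X_G$. In the planar case this is immediate from~\eqref{line}, where the parametrization of $\ell_{a,b}$ directly exposes $(a,b)$. The spherical analog must be extracted from the explicit description of $\ell^{\mathbb{S}^2}_{a,b}$ in~\cite{Tao}: the line determines the axis of the one-parameter rotation family, which in turn determines the perpendicular bisector of the spherical arc $ab$, and combining this with the rotation-angle data recovers $(a,b)$ up to a discrete ambiguity (the involution swapping $a$ and $b$). Carrying this bookkeeping out cleanly is the technical heart of the argument; once it is in place, the entire framework of Sections~\ref{sec:rigid}--\ref{sec:nec} ports over with no further changes.
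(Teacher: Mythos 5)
Your overall strategy is exactly the one the paper intends (the paper gives no explicit proof here, only the remark that the line framework ports to $\S^2$ via the parametrization of its isometries in \cite{Tao}): set up a spherical edge function and the variety $V_G^{\S^2}$, transport it into $X_G\cap\R^{4n}$ by a spherical Elekes--Sharir map, and play the spherical analogues of Lemmas~\ref{rigidlaman} and~\ref{rigidpairs} against Theorem~\ref{main}, exactly as in the proof of Theorem~\ref{mainnec}. Your attention to the finite-to-one property of $\varphi^{\S^2}$ (needed so that $\dim_\R V_G^{\S^2}\le\dim_\C X_G$) is well placed; that is indeed the one genuinely new verification the spherical setting requires.

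However, the final assembly of the converse direction is logically broken. The direction still owed after your first paragraph is ``Laman subgraph $\Rightarrow$ rigid on $\S^2$,'' but you argue: ``Suppose $G$ contains no Laman subgraph; then $G$ is flexible \dots so $\dim_\C X_G\ge 2n+4$, contradicting Theorem~\ref{main}.'' There is no contradiction to be had: Theorem~\ref{main} only bounds $\dim X_G$ when $G$ \emph{contains} a Laman subgraph, and for graphs with no Laman subgraph one indeed has $\dim X_G\ge 2n+4$ (that is the content of Theorem~\ref{mainnec}). Moreover, even if the contradiction were real, it would ``prove'' that every graph contains a Laman subgraph, which is absurd; and the statement you set out from (``no Laman subgraph $\Rightarrow$ flexible'') is just the contrapositive of the direction you already established, so the paragraph proves nothing new. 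The correct assembly, using exactly the ingredients you have on the table, is: assume $G$ \emph{contains} a Laman subgraph, so $\dim_\C X_G=2n+3$ by Theorem~\ref{main}; if $G$ were flexible on $\S^2$, the spherical analogue of Lemma~\ref{rigidpairs} would give $\dim_\R V_G^{\S^2}\ge 2n+4$, and pushing forward by the (finite-to-one) map $\varphi^{\S^2}$ would force $\dim_\C X_G\ge 2n+4$, a contradiction; hence $G$ is rigid on $\S^2$. With that one hypothesis swapped, your argument goes through.
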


\begin{corollary}
Consider embeddings of graphs in the unit sphere $\S^2\subset\R^3$.
Then a graph $G$ is globally rigid if and only if there exists a subgraph $G$ is Hendrickson.
\end{corollary}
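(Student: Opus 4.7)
The plan is to extend the Elekes--Sharir framework of Section~\ref{reduction} from the plane to the sphere, and then run the same argument that yielded Theorem~\ref{main2nec}. First, I would use the fact (see~\cite{Tao}) that the group $SO(3)$ of orientation-preserving isometries of $\S^2$ admits a parametrization by points of $\R^3$ (for instance, via the Rodrigues vector $\tan(\theta/2)\,\hat n$), under which, for each ordered pair $(a,b)$ of points of $\S^2$, the locus of rotations mapping $a$ to $b$ is precisely a line $\ell_{a,b}\subset\R^3$. The crucial property to verify is the spherical analog of the Elekes--Sharir incidence identity: $\ell_{a,b}$ and $\ell_{c,d}$ meet (or coincide, or are parallel at infinity) if and only if $d_{\S^2}(a,c)=d_{\S^2}(b,d)$, with the common point representing the unique rotation sending $a\mapsto b$ and $c\mapsto d$. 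I would also check that conversely every (sufficiently generic) line of $\R^3$ arises as some $\ell_{a,b}$ with $a,b\in\S^2$.

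Next, I would prove the sphere analogs of Lemma~\ref{cong}, Lemma~\ref{WX} and Lemma~\ref{globalpairs}. Concurrent lines $\{\ell_{a_i,b_i}\}$ correspond to sequences $A=(a_i)$, $B=(b_i)$ related by an element of $SO(3)$; coplanar lines correspond to sequences related by an orientation-reversing isometry of $\S^2$ (the role played in the planar case by the reflection $h^{*}$); in either case $A$ and $B$ are congruent on the sphere. Defining
\[
\varphi:\bigl((\S^2)^n\bigr)^2\to\R^{4n},\qquad \bigl((a_i),(b_i)\bigr)\mapsto(\ell_{a_1,b_1},\ldots,\ell_{a_n,b_n}),
\]
the spherical edge-length variety $V_G^{\S^2}:=\{(\p,\p')\in((\S^2)^n)^2\mid d_{\S^2}(\p_i,\p_j)=d_{\S^2}(\p'_i,\p'_j)\ \forall\{i,j\}\in E\}$ is mapped into $X_G\cap\R^{4n}$, and its image is Zariski-dense in the real part of $X_G$ because every line of $\R^3$ lies in the image of $(a,b)\mapsto\ell_{a,b}$.

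With these tools in place, both directions of the corollary follow by transcribing the proofs of Theorems~\ref{main2} and~\ref{main2nec} into the spherical language. For the ``if'' direction, when $G$ is Hendrickson, Theorem~\ref{main2} forces every $(2n+3)$-dimensional irreducible component of $X_G$ to lie in $X_{K_n}$; pulling back through $\varphi$ shows that every maximal-dimensional irreducible component of $V_G^{\S^2}$ consists of pairs of embeddings related by an isometry of $\S^2$, and the sphere analog of Lemma~\ref{globalpairs} yields global rigidity on $\S^2$. For the ``only if'' direction, global rigidity on $\S^2$ implies, via $\varphi$ and a dimension count identical to the one in the proof of Theorem~\ref{main2nec}, that every maximal-dimensional component of $X_G$ is contained in $X_{K_n}$, whence Theorem~\ref{main2nec} gives that $G$ is Hendrickson.

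The main obstacle will be the careful set-up of the spherical Elekes--Sharir dictionary: one has to single out a parametrization of $SO(3)$ by $\R^3$ under which the fiber $\{g\in SO(3):g(a)=b\}$ becomes an \emph{honest algebraic line} (not merely a smooth curve), under which the distance-preservation $d_{\S^2}(a,c)=d_{\S^2}(b,d)$ corresponds \emph{exactly} to line incidence, and which extends naturally to capture orientation-reversing isometries as the coplanar case. Once this geometric bridge is established, the rest of the argument is a formal translation through $\varphi$ of the already-proved statements about the variety $X_G$.
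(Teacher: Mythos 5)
Your overall plan is exactly the route the paper intends (the paper offers no proof of this corollary beyond the remark that isometries of $\S^2$ can be represented as points of $\R^3$), and your setup of the spherical Elekes--Sharir dictionary is sound: under the Cayley/Rodrigues parametrization of $SO(3)$, the condition $Ra=b$ becomes the affine-linear condition $\omega\times(a+b)=b-a$ on the Rodrigues vector $\omega$, so the fiber is an honest line, the incidence--isometry dictionary holds, and essentially every line of $\R^3$ arises as some $\ell_{a,b}$ (the map is $2$-to-$1$, with $\ell_{a,b}=\ell_{-a,-b}$). With that in hand, your ``if'' direction is a correct transcription of Theorem~\ref{main2} together with the spherical analogues of Lemmas~\ref{cong}, \ref{WX} and \ref{globalpairs}.

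The gap is in your ``only if'' direction. You claim that global rigidity on $\S^2$ forces every maximal-dimensional component of $X_G$ into $X_{K_n}$ ``via a dimension count identical to the one in the proof of Theorem~\ref{main2nec}''; but that dimension count runs in the opposite direction (from ``all maximal components lie in $X_{K_n}$'' to ``globally rigid''). To reverse it you would need to show that a maximal-dimensional \emph{complex} component $X\not\subset X_{K_n}$ carries a full-dimensional real locus consisting of $\varphi$-images of pairs of \emph{spherical} embeddings, from which a non-congruent generic pair could be extracted; Zariski-density of $\varphi(V_G^{\S^2})$ in $X_G\cap\R^{4n}$ does not give this for a prescribed component, whose real points may be lower-dimensional or empty. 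Note that even in the planar case the paper does not prove ``globally rigid $\Rightarrow$ Hendrickson'' by the variety argument: it quotes Hendrickson's theorem (Lemma~\ref{uniquehend}). The clean fix for the sphere is the same: prove (or quote) the spherical analogue of Hendrickson's necessity argument directly --- if $G$ is not redundantly rigid one flexes after deleting an edge and lands at a non-congruent configuration with the same edge lengths, and if $G$ is not $3$-connected one reflects one side of a $2$-separation in the great circle through the two cut vertices --- or invoke a transfer principle between spherical and Euclidean rigidity. As written, that step of your proposal does not go through.
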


\appendix 
\section{Tools from algebraic geometry}
\label{sec:alggeom}

We use the following basic facts from algebraic geometry theory. 
\begin{lemma}[{\cite[Proposition I.7.1]{Ha77}}]
\label{pre:dim}
Let $X\subset \C^D$ be an irreducible variety of dimension $k$ and $f\in \C[z_1,\ldots,z_D]$.
Then $X\cap Z(f)$ is either $X$, the empty set, or has 
dimension $k-1$.
\end{lemma}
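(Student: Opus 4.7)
The plan is to reduce the statement to Krull's Hauptidealsatz (principal ideal theorem) applied to the coordinate ring of $X$, handling the two degenerate cases separately.

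First I would pass to algebra: restrict $f$ to $X$, obtaining an element $\bar f$ of the affine coordinate ring $A := \C[z_1,\ldots,z_D]/I(X)$. Since $X$ is irreducible, $I(X)$ is prime and $A$ is a finitely generated integral domain over $\C$; by the dimension theory of such domains, $A$ has Krull dimension $k$. Now split into three cases. If $\bar f = 0$, then $f$ vanishes identically on $X$, so $X \subset Z(f)$ and $X\cap Z(f) = X$. If $\bar f$ is a nonzero constant (equivalently, a unit of $A$), then by the Nullstellensatz $\bar f$ lies in no maximal ideal of $A$, hence vanishes at no point of $X$, so $X \cap Z(f) = \emptyset$. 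These dispose of the first two alternatives.

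In the remaining case, $\bar f$ is a nonzero, non-unit element of the domain $A$. The irreducible components of $X \cap Z(f)$ correspond, under the Nullstellensatz, to the minimal primes $\mathfrak p$ of $A$ containing $\bar f$, and the dimension of the component cut out by $\mathfrak p$ is $\dim(A/\mathfrak p)$. Here I would invoke Krull's Hauptidealsatz: in a Noetherian ring, every minimal prime over a principal ideal generated by a non-zero-divisor has height at most $1$. Since $A$ is a domain, $\bar f$ is automatically a non-zero-divisor; since $\bar f$ is not a unit, each such minimal prime is nonzero and hence has height exactly $1$. Combined with the equality $\dim(A/\mathfrak p) = \dim A - \mathrm{ht}(\mathfrak p) = k-1$, valid for affine domains over a field, each component has dimension $k-1$, and therefore $\dim(X \cap Z(f)) = k-1$.

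The main obstacle is really bookkeeping rather than insight: one must cite the correct form of Krull's theorem (the version for principal ideals, not arbitrary ones) and the codimension/dimension identity $\dim(A/\mathfrak p) = \dim A - \mathrm{ht}(\mathfrak p)$, which holds because $A$ is a catenary finitely generated $\C$-algebra that is a domain. Both are standard results from commutative algebra, so the argument is short once those tools are granted, and this is why the paper simply cites \cite{Ha77}.
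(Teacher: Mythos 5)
Your proof is correct and is essentially the standard argument behind the paper's citation: the paper offers no proof of this lemma, deferring to Hartshorne, whose proof of the underlying dimension theorem likewise rests on Krull's Hauptidealsatz together with the formula $\dim(A/\mathfrak p)=\dim A-\mathrm{ht}(\mathfrak p)$ for affine domains. One cosmetic point: in your second case the operative hypothesis is that $\bar f$ is a unit of $A$ rather than a nonzero constant (a unit of a finitely generated $\C$-domain need not be a scalar, e.g.\ $t\in\C[t,t^{-1}]$), but since the trichotomy you actually use is zero/unit/nonzero non-unit, which is exhaustive, the argument is unaffected.
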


\begin{lemma}\label{pre:fib}
Let $\pi$ denote the projection of $\C^D$ onto its first $k$ coordinates and let $X$ be an irreducible subvariety of $\C^D$.
Let $Y$ denote the Zariski closure of $\pi(X)$. Then
$$\dim X\le \dim Y+\dim (\pi^{-1}(y)\cap X)$$
for every $y\in \pi(X)$.
\end{lemma}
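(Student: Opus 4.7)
The claim is equivalent to the lower bound
$$\dim(\pi^{-1}(y)\cap X)\ge \dim X-\dim Y\quad\text{for every }y\in\pi(X),$$
which is the classical lower semicontinuity of fiber dimension. The plan is to derive it from Lemma~\ref{pre:dim} by cutting out the point $\{y\}$ in $Y$ using $\dim Y$ polynomials, pulling those polynomials back to $\C^D$ via $\pi$, and then applying Lemma~\ref{pre:dim} iteratively to $X$.

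Set $n:=\dim X$ and $m:=\dim Y$, and fix $y\in\pi(X)$. First I would use the standard dimension theory of irreducible varieties (Krull's principal ideal theorem, applied to the local ring $\mathcal{O}_{Y,y}$) to produce $m$ polynomial functions $f_1,\ldots,f_m\in\C[z_1,\ldots,z_k]$, each vanishing at $y$, whose common zero set meets $Y$ in a set containing $y$ as an isolated point. Equivalently, there is a Zariski open neighborhood $U\subseteq Y$ of $y$ such that $V(f_1,\ldots,f_m)\cap U=\{y\}$. Set $g_i:=f_i\circ\pi\in\C[z_1,\ldots,z_D]$ and $X':=X\cap \pi^{-1}(U)$; because $X$ is irreducible and $\pi(X)$ meets $U$, the subvariety $X'\subseteq X$ is nonempty and Zariski dense in $X$, in particular $\dim X'=n$. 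By construction,
$$\pi^{-1}(y)\cap X' \;=\; X'\cap Z(g_1)\cap\cdots\cap Z(g_m).$$

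Next I would apply Lemma~\ref{pre:dim} iteratively to the irreducible variety $X$ and the polynomials $g_1,\ldots,g_m$. Each intersection with a hypersurface $Z(g_i)$ either leaves the current variety unchanged or reduces the dimension of every irreducible component by exactly $1$; hence after $m$ such cuts the resulting set either has some irreducible component of dimension at least $n-m$, or is empty. Since $y\in\pi(X')$, the fiber $\pi^{-1}(y)\cap X'$ is nonempty, ruling out the latter. Therefore $\dim(\pi^{-1}(y)\cap X)\ge \dim(\pi^{-1}(y)\cap X')\ge n-m$, which rearranges to the claimed inequality.

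The only nontrivial step is the production of the $m$ cutting functions $f_1,\ldots,f_m$ that isolate $y$ inside $Y$; this is the content of Krull's principal ideal theorem (existence of a system of parameters for the local ring at $y$ in an $m$-dimensional irreducible variety), and is the real reason fiber dimension behaves as it does. Since the paper already appeals to Hartshorne for Lemma~\ref{pre:dim}, the cleanest presentation is to simply cite the fiber dimension theorem as given in \cite[Exercise II.3.22(b)]{Ha77} and relegate the Krull-theoretic argument above to a remark (or omit it), which matches the style used for Lemma~\ref{pre:dim}.
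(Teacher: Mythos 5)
Your proposal is correct. Note that the paper itself offers no proof of Lemma~\ref{pre:fib}: it is stated in the appendix as a black-box ``basic fact'' (the fiber-dimension theorem), so there is nothing to match against; your argument supplies the standard textbook derivation, and it is sound. The key step you isolate --- producing $m=\dim Y$ polynomials vanishing at $y$ that cut $y$ out as an isolated point of their zero set in $Y$ --- is exactly the existence of a system of parameters, and the inductive construction (at each stage pick a function vanishing at $y$ but not identically on any positive-dimensional component through $y$ of the current intersection) is routine. Two small points worth making explicit if you write this up: first, after the first cut the intersection $X\cap Z(g_1)$ need no longer be irreducible, so the iteration uses the component-wise form of Lemma~\ref{pre:dim} (every irreducible component of $X\cap Z(f)$ has dimension $\dim X-1$ when the intersection is proper and nonempty, which is what \cite[Prop.~I.7.1]{Ha77} actually gives); the conclusion is then that every component of $X\cap Z(g_1)\cap\cdots\cap Z(g_m)$ has dimension at least $n-m$, and the component through a preimage of $y$ meets the open set $\pi^{-1}(U)$ in a dense subset lying inside $\pi^{-1}(y)\cap X$, which yields the bound. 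Second, $X'=X\cap\pi^{-1}(U)$ is open in $X$ rather than a closed subvariety, but since a nonempty open subset of an irreducible variety is dense and has the same dimension, this is harmless. Your closing suggestion --- to simply cite the fiber-dimension theorem, e.g.\ \cite[Exercise~II.3.22(b)]{Ha77} --- is in fact what the paper implicitly does, so either presentation is consistent with its style.
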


\begin{lemma}\label{pre:proj}
Let $\pi$ denote the projection of $\C^D$ onto its first $k$ coordinates and let $S\subset \C^d$ be any subset.
Then
$
\cl(\pi(\cl(S)))=\cl(\pi(S)).
$
\end{lemma}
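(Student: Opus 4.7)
The plan is to prove both inclusions separately, using only the fact that $\pi$ is a polynomial map and therefore continuous with respect to the Zariski topology on $\C^D$ and $\C^k$. Throughout, I will use monotonicity of $\cl(\cdot)$ and $\pi(\cdot)$ with respect to set inclusion, together with the basic closure identity $\cl(\cl(T))=\cl(T)$.

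For the inclusion $\cl(\pi(S)) \subseteq \cl(\pi(\cl(S)))$, I would simply observe that $S \subseteq \cl(S)$, which gives $\pi(S) \subseteq \pi(\cl(S))$, and then applying $\cl(\cdot)$ to both sides yields the desired inclusion. This direction requires no geometry beyond set-theoretic monotonicity.

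For the reverse inclusion $\cl(\pi(\cl(S))) \subseteq \cl(\pi(S))$, the key idea is to pull back the target closed set through $\pi$. Since $\pi$ is a polynomial (in fact linear) map, it is continuous in the Zariski topology, so $\pi^{-1}\bigl(\cl(\pi(S))\bigr)$ is a Zariski-closed subset of $\C^D$. Moreover, it contains $S$, because for every $s \in S$ we have $\pi(s) \in \pi(S) \subseteq \cl(\pi(S))$. Since $\cl(S)$ is the smallest Zariski-closed set containing $S$, we conclude $\cl(S) \subseteq \pi^{-1}\bigl(\cl(\pi(S))\bigr)$, equivalently $\pi(\cl(S)) \subseteq \cl(\pi(S))$. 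Taking Zariski closures once more and using $\cl(\cl(\pi(S))) = \cl(\pi(S))$ gives $\cl(\pi(\cl(S))) \subseteq \cl(\pi(S))$, completing the proof.

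There is no substantive obstacle here; the statement is a formal consequence of the fact that coordinate projections are continuous maps between the Zariski topologies on $\C^D$ and $\C^k$. The only point worth emphasizing, should one wish to avoid citing continuity of $\pi$ as a black box, is the explicit verification that the preimage under a polynomial map of a variety defined by polynomials $f_1,\dots,f_r$ on $\C^k$ is the variety cut out by $f_1 \circ \pi,\dots,f_r \circ \pi$ on $\C^D$, which is again a variety. This makes the Zariski-continuity of $\pi$ transparent and keeps the argument self-contained.
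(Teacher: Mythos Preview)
Your argument is correct and is the standard proof of this closure identity: the easy inclusion follows from monotonicity, and the reverse inclusion follows from Zariski-continuity of the polynomial map $\pi$, via the preimage $\pi^{-1}(\cl(\pi(S)))$ being closed and containing $S$, hence containing $\cl(S)$. There is nothing to add.

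The paper itself does not supply a proof of this lemma; it is simply listed in the appendix as one of several ``basic facts from algebraic geometry'' and invoked as a black box in the main text. So there is no paper proof to compare against, and your self-contained argument fills that gap appropriately.
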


\end{document}